\documentclass[10pt,twoside]{amsart}
\usepackage{mathtools,amsmath,amssymb}
\usepackage{mathrsfs}

\usepackage[a4paper,top=1in, bottom=1in, left=1in, right=1in]{geometry} % resize page
\usepackage{graphicx} % Required for inserting images
\usepackage[colorlinks=true,linkcolor=blue,citecolor=black]{hyperref} % Enable hyperlinks
\usepackage{cleveref}
\usepackage[backend=biber,backref=true,url=false,maxnames=9,isbn=false,sorting=nyt,style=numeric-comp]{biblatex} % Enable backref
\renewbibmacro*{pageref}{%
  \iflistundef{pageref}
    {}
    {\addspace% Custom separator
     \mkbibbrackets{\printlist[pageref][-\value{listtotal}]{pageref}}}} 
\usepackage{lipsum}% for rescaling tikzcd 
\usepackage{adjustbox} % for rescaling tikzcd
\usepackage{tikz-cd}
\usepackage{mathptmx}
\usepackage{quiver}
\usepackage{float}
\usepackage{amssymb}
\usepackage[english]{babel}
\usepackage{amsthm}
\usepackage{amsmath}
\usepackage{mathtools}
\usepackage{soul}
\usepackage{nicematrix}
\usepackage{array}   
\usepackage{booktabs}
\usepackage{float}
\usepackage{orcidlink}
\usepackage[toc,page]{appendix}
\usepackage{caption}
\usepackage{mathrsfs}     
\makeatletter
\renewcommand*\env@matrix[1][*\c@MaxMatrixCols c]{%
  \hskip -\arraycolsep
  \let\@ifnextchar\new@ifnextchar
  \array{#1}}
\makeatother

\usepackage{makecell, caption, chngcntr} %

\usepackage{siunitx} %
\counterwithin{table}{section}

\newcommand{\PSL}{\operatorname{PSL}}
\newcommand{\fq}{\mathbb{F}}
\newcommand{\SL}{\operatorname{SL}}
\newcommand{\Q}{\mathbb Q}
\newcommand{\Gal}{\mathrm{Gal}}
\newcommand{\e}{\varepsilon}
\newcommand{\Z}{\mathbb Z}
\newcommand{\irr}{\mathrm{Irr}}

\newcommand{\M}{\mathrm{M}}
\newcommand{\cd}{\mathrm{cd}}

\usepackage{thmtools, thm-restate}
\declaretheorem{theorem}
\newtheorem{lemma}{Lemma}[section]

\theoremstyle{definition} % Sets non-italic, upright font

\title[Wedderburn Decomposition of the Rational Group Algebras of $\SL_2(q)$ and $\PSL_2(q)$]{Wedderburn decomposition of the rational group algebras of $\SL_2(q)$ and $\PSL_2(q)$}
\date{\today}
\author[Choudhary]{Ram Karan Choudhary\orcidlink{0009-0003-7688-6397}}
\email[(Choudhary)]{ramkchoudhary1997@gmail.com, ram.choudhary@iiserpune.ac.in}
\address{Indian Institute of Science Education and Research Pune, Dr. Homi Bhabha Road, Pashan, Pune 411008, India}
\author[Panja]{Saikat Panja\orcidlink{0000-0002-9639-3122}}
\email[(Panja)]{panjasaikat300@gmail.com}
\address{Indian Statistical Institute, Bengaluru Centre, 8th Mile, Mysore Rd, RVCE Post, Gnana Bharathi, Bengaluru, Karnataka 560059, India}
\thanks{Choudhary is supported by a postdoctoral fellowship from IISER Pune. Panja is supported by an NBHM postdoctoral fellowship file number ending at R\&D-II/6746.}
\date{\today}
\dedicatory{}
\subjclass[2020]{primary 20C05; secondary 20G15, 20G05}
\keywords{rational group algebra; Wedderburn decomposition; finite groups of Lie type}
\begin{document}
\begin{abstract}
In this article, we derive explicit combinatorial formulas, depending only on $q$, for the Wedderburn decomposition of the rational group algebras of the finite linear groups $\operatorname{SL}_2(q)$ and $\operatorname{PSL}_2(q)$. Furthermore, we also determine the number of pairwise non-isomorphic simple $\Q G$-modules of each possible dimension for $G$ being either $\operatorname{SL}_2(q)$ or $\operatorname{PSL}_2(q)$.
\end{abstract}

\maketitle
\tableofcontents
\section{Introduction}\label{sec:intro}
Let $G$ be a group, and let $\mathbb{F}$ be a number field. Then the \emph{group algebra} of $G$ over $\mathbb{F}$ is
\[
\mathbb{F}G = \{\, f: G \to \mathbb{F} \, : \, f(g) = 0 \text{ for all but finitely many } g \in G \,\},
\]
where addition, scalar multiplication, and multiplication are defined by
\[
(f+h)(g) = f(g) + h(g), \,\,\,\, (af)(g) = a\,f(g), \,\,\,\, \text{and} \,\,\,\, (fh)(g) = \sum_{xy=g} f(x)h(y)
\]
for $f,h \in \mathbb{F}G$, $a \in \mathbb{F}$, and $g \in G$, respectively. Note that $\mathbb{F}G$ is a free $\mathbb{F}$-module with basis $G$. Describing group algebras is a classical problem in algebra, allowing many approaches. When considering semisimple algebras, i.e., the case of a finite group over a field whose characteristic does not divide the group order, this involves describing the Wedderburn components of the group algebra. By the Wedderburn--Artin theorem, a semisimple group algebra $\mathbb{F}G$ decomposes as a direct sum of matrix algebras over division rings
\[
\mathbb{F}G \cong \bigoplus_{i=1}^{r} \M_{n_i}(D_i),
\]
where each $\M_{n_i}(D_i)$ is a full matrix algebra of size $n_i$ over a division ring $D_i$, {\color{black}which is} finite-dimensional over its center. These are called the \emph{simple components} of $\mathbb{F}G$. Moreover, by the Brauer--Witt theorem \cite{Yam}, each simple component is Brauer equivalent to a cyclotomic algebra. The study of the Wedderburn decomposition of rational group algebras has received significant attention due to its importance in understanding various algebraic structures (see \cite{Herman, Jes-Rio, Rit-Seh}).  

This decomposition can be achieved explicitly using character-theoretic methods or by exploiting subgroup structures via Shoda pairs for certain classes of rational group algebras, in particular for rational group algebras of monomial groups. See the works on rational group algebras based on Shoda pair theory presented in~\cite{Shoda1, Shoda2, Shoda3, Shoda4, Shoda5, Shoda6}, which have been done in recent years. Computational approaches, such as the \textsc{Wedderga} package in {\sf GAP}~\cite{Gap}, implement these methods to compute the Wedderburn decomposition of rational group algebras of such groups; however, exact computations remain challenging, especially for groups of large order. 

For finite abelian groups, Perlis and Walker \cite{PW} provided a combinatorial formula for the Wedderburn decomposition of their rational group algebras based on counting cyclic subgroups. For several classes of finite $p$-groups, the first author and Prajapati have developed combinatorial techniques to determine the Wedderburn decomposition explicitly, either by reducing to abelian sections or using parameters from group presentations, as detailed in \cite{Ram1, Ram2, Ram3, Ram4, Ram5, Ram6}.  

Let $\mathbb{F}_q$ denote the finite field of order $q$, where $q=p^m$ for some prime $p$ and positive integer $m$. Let $\SL_2(q)$ and $\PSL_2(q)$ denote the groups of $2 \times 2$ matrices over $\mathbb{F}_q$ with determinant $1$ and their projective version, respectively, i.e.,
\[
\SL_2(q) = \{ A \in \mathrm{GL}_2(\mathbb{F}_q) \, : \, \det(A) = 1 \}\,\,\,\, \text{and} \,\,\,\, 
\PSL_2(q) = \SL_2(q)/Z(\SL_2(q)),
\]
where $Z(\SL_2(q))$ is the center of $\SL_2(q)$. In this article, we derive combinatorial formulas for the Wedderburn decomposition of the rational group algebras of the finite classical matrix groups $\SL_2(q)$ and $\PSL_2(q)$.

For a positive integer $d$, let $\zeta_d$ denote a primitive $d$-th root of unity in $\mathbb{C}$. Note that $\PSL_2(q)=\SL_2(q)$ for $q=2^m$. Theorem~\ref{thm:wedderburn PSL(even)} determines a formula, depending only on $q$, for the Wedderburn decomposition of the rational group algebras of $\PSL_2(q)=\SL_2(q)$, where $q$ is a power of $2$.

\begin{theorem}\label{thm:wedderburn PSL(even)}
	Let $G = \PSL_2(q) = \SL_2(q)$, where $q$ is a power of $2$. Suppose that 
\[
A = \{ \, 2 < d \leq q+1 \, : \, d \mid q+1 \, \} \,\,\,\, \text{and} \,\,\,\, 
B = \{ \, 2 < d \leq q-1 \, : \, d \mid q-1 \, \}.
\] 
Then the Wedderburn decomposition of $\mathbb{Q}G$ is given by
	$$\mathbb{Q}G \cong \Q \bigoplus_{d \in A} \M_{q-1}(\Q(\zeta_{d}+\zeta_d^{-1})) \bigoplus \M_q(\Q) \bigoplus_{d \in B} \M_{q+1}(\Q(\zeta_{d}+\zeta_d^{-1})).$$
\end{theorem}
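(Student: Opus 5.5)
We will use the complex character table of $G=\SL_2(q)$, $q=2^m$, together with the standard description of the simple components of $\Q G$. For a complex irreducible character $\chi$, the component of $\Q G$ it determines is $\M_{n}(D)$, where $D$ has centre $\Q(\chi)$ and Schur index $m_\Q(\chi)$ over $\Q$, with $n\,m_\Q(\chi)=\chi(1)$. Galois-conjugate characters give the same component. So $\Q G\cong\bigoplus_{[\chi]}\M_{\chi(1)/m_\Q(\chi)}(D_\chi)$, the sum running over Galois orbits. The proof therefore has three steps: list the characters and their fields of values, sort them into Galois orbits, and show every Schur index is $1$.

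First recall the character table for $q$ even. The $q+1$ classes are the identity, one class of nontrivial unipotent elements, $(q-2)/2$ split semisimple classes $a^{\ell}$ (with $a$ of order $q-1$), and $q/2$ nonsplit classes $b^{k}$ (with $b$ of order $q+1$). The irreducible characters are:
\begin{itemize}
\item the trivial character $1$;
\item the Steinberg character $\mathrm{St}$ of degree $q$;
\item principal series characters $\chi_i$ of degree $q+1$, for $1\le i\le (q-2)/2$, with values $\zeta_{q-1}^{i\ell}+\zeta_{q-1}^{-i\ell}$ on $a^{\ell}$, $1$ on unipotents and $0$ on $b^{k}$;
\item discrete series characters $\theta_j$ of degree $q-1$, for $1\le j\le q/2$, with values $-(\zeta_{q+1}^{jk}+\zeta_{q+1}^{-jk})$ on $b^{k}$, $-1$ on unipotents and $0$ on $a^{\ell}$.
\end{itemize}
Hence $\Q(\chi_i)=\Q(\zeta_d+\zeta_d^{-1})$ with $d=(q-1)/\gcd(i,q-1)$, and likewise $\Q(\theta_j)=\Q(\zeta_d+\zeta_d^{-1})$ with $d=(q+1)/\gcd(j,q+1)$. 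Both $q\pm1$ are odd, so every such $d$ is odd and $d\ge 3$ automatically.

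Next, group the characters into Galois orbits. An automorphism $\sigma_t\colon\zeta_{q-1}\mapsto\zeta_{q-1}^t$ sends $\chi_i$ to $\chi_{ti}$, with indices taken modulo $\pm$. Therefore $\chi_i$ and $\chi_{i'}$ are conjugate exactly when $\gcd(i,q-1)=\gcd(i',q-1)$. This gives one orbit for each divisor $d>1$ of $q-1$, i.e. for each $d\in B$. As a check, the orbit for $d$ has size $\varphi(d)/2=[\Q(\zeta_d+\zeta_d^{-1}):\Q]$, and summing over $d\in B$ gives $\sum_{d\mid q-1,\,d>1}\varphi(d)/2=(q-2)/2$, which is the number of $\chi_i$. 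The same argument applied to the $\theta_j$ gives one orbit for each $d\in A$. The trivial and Steinberg characters are rational. Dimension counting confirms the decomposition: $1+q^2+(q+1)^2(q-2)/2+(q-1)^2q/2=q(q^2-1)=|G|$.

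The main obstacle is showing that all Schur indices are $1$. The cleanest route is to realize each representation over its field of values.
\begin{itemize}
\item The trivial and Steinberg characters are rational. $\mathrm{St}=1_B^G-1$ is afforded by a rational permutation module modulo its trivial constituent, so $m_\Q(\mathrm{St})=1$.
\item For $\chi_i=\mathrm{Ind}_B^G\lambda_i$, where $B$ is the Borel subgroup and $\lambda_i$ is a linear character: since $\chi_i$ is irreducible and $\Q(\lambda_i)=\Q(\zeta_d)$, we get $m_{\Q(\chi_i)}(\chi_i)\le 2$. To sharpen this to $1$, note that $\chi_i+\chi_i^{\sigma}$ with $\sigma\colon\zeta\mapsto\zeta^{-1}$ is $2\chi_i$, which is the character of $\mathrm{Ind}_B^G$ of the $\Q(\zeta_d+\zeta_d^{-1})$-form of $\lambda_i\oplus\bar\lambda_i$. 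Alternatively, and more simply, use Brauer--Speiser: real-valued characters have Schur index at most $2$. Then Fein's/Janusz's result that $\SL_2(2^m)$ has all Schur indices $1$ (odd degree or $2$-local arguments), or a local analysis, finishes. Concretely, the local index at an odd prime $\ell$ divides $\ell-1$ and the $2$-part must be trivial. The real places are handled by the Frobenius--Schur indicator: every character of $\SL_2(2^m)$ is afforded by a real representation, since the group is generated by involutions and the classical computation gives $\nu_2=1$.
\item For $\theta_j$, use that $\theta_j$ is a constituent of an induced character from $N_G(\langle b\rangle)$ (dihedral of order $2(q+1)$), whose characters all have Schur index $1$. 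The multiplicity is $1$ and the fields of values agree, so $m_\Q(\theta_j)=1$.
\end{itemize}

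I expect to rely ultimately on the known theorem (Janusz; Shahabi) that all Schur indices of $\SL_2(q)$ for $q$ even equal $1$, quoted as an earlier result. Assembling the components then yields the stated formula.
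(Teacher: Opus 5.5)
Your proposal is correct and follows essentially the same route as the paper. You read off $\Q(\chi)$ from the character table, group the degree-$(q+1)$ and degree-$(q-1)$ characters into Galois orbits indexed by the divisors $d>1$ of $q-1$ and $q+1$ (each orbit of size $\tfrac12\varphi(d)$), and then combine Schur index $1$ (Janusz, as in the paper's Lemma~\ref{lemma:schurindexPSL}) with Reiner's theorem to obtain components $\M_{\chi(1)}(\Q(\chi))$.

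Your side sketches for the Schur indices are not complete on their own. Realizing $2\chi_i$ over $\Q(\chi_i)$ only gives $m_\Q(\chi_i)\mid 2$, and the remark about local indices at odd primes rules nothing out. Since you ultimately cite the known result exactly as the paper does, this does not affect the proof.
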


Next, we prove Theorem~\ref{thm:wedderburn PSL(odd)}, which provides a formula, depending only on $q$, for the Wedderburn decomposition of the rational group algebra of $\PSL_2(q)$, where $q$ is a power of an odd prime $p$.
\begin{theorem}\label{thm:wedderburn PSL(odd)}
	Let $G = \PSL_2(q)$, where $q$ is a power of an odd prime $p$. Suppose that
\[
A=\left\{\, 2 < d \leq \tfrac{q+1}{2} \, : \, d \mid \tfrac{q+1}{2} \,\right\}
\,\,\,\, \text{and} \,\,\,\,
B=\left\{\, 2 < d \leq \tfrac{q-1}{2} \, : \, d \mid \tfrac{q-1}{2} \,\right\}.
\]
Then we have the following.
    \begin{enumerate}
        \item {\bf Case ($q\equiv3\pmod{4}$).} In this case, the Wedderburn decomposition of $\mathbb{Q}G$ is given by
	$$\mathbb{Q}G \cong \Q \bigoplus \M_{\frac{q-1}{2}}(\Q(\sqrt{-q})) \bigoplus_{d \in A} \M_{q-1}(\Q(\zeta_{d}+\zeta_d^{-1})) \bigoplus \M_q(\Q) \bigoplus_{d \in B} \M_{q+1}(\Q(\zeta_{d}+\zeta_d^{-1})).$$

    \item {\bf Case ($q\equiv1\pmod{4}$).} In this case, we have the following two subcases.
    \begin{enumerate}
        \item {\bf Subcase ($q$ is not a square)}. In this subcase, the Wedderburn decomposition of $\mathbb{Q}G$ is given by
        $$\mathbb{Q}G \cong \Q \bigoplus \M_{\frac{q+1}{2}}(\Q(\sqrt{q})) \bigoplus_{d \in A} \M_{q-1}(\Q(\zeta_{d}+\zeta_d^{-1})) \bigoplus \M_q(\Q) \bigoplus_{d \in B} \M_{q+1}(\Q(\zeta_{d}+\zeta_d^{-1})).$$

        \item {\bf Subcase ($q$ is a square)}. In this subcase, the Wedderburn decomposition of $\mathbb{Q}G$ is given by
        $$\mathbb{Q}G \cong \Q \bigoplus 2\M_{\frac{q+1}{2}}(\Q) \bigoplus_{d \in A} \M_{q-1}(\Q(\zeta_{d}+\zeta_d^{-1})) \bigoplus \M_q(\Q) \bigoplus_{d \in B} \M_{q+1}(\Q(\zeta_{d}+\zeta_d^{-1})).$$
    \end{enumerate}
    \end{enumerate}
\end{theorem}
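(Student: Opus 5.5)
We would use the standard character-theoretic description of the rational group algebra. For a finite group $G$, the simple components of $\Q G$ are in bijection with the Galois orbits of $\irr(G)$ under $\Gal(\overline{\Q}/\Q)$. The component attached to the orbit of $\chi$ is $\M_n(D)$, where the centre of $D$ is $\Q(\chi)$ and $n\cdot\sqrt{[D:\Q(\chi)]}=\chi(1)/m_\Q(\chi)$, with $m_\Q(\chi)$ the Schur index. So the plan has three parts: list the irreducible characters of $\PSL_2(q)$, $q$ odd, with their fields of values; group them into Galois orbits; and show that all Schur indices over $\Q$ are $1$. Given these, each component is $\M_{\chi(1)}(\Q(\chi))$.

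\textbf{Step 1: the character table.} We take the classical character table of $\SL_2(q)$ and keep the characters trivial on $-I$. These are:
\begin{itemize}
\item the trivial character and the Steinberg character of degree $q$, both rational;
\item the principal series characters $\chi_i$ of degree $q+1$, for $1\le i\le (q-3)/2$ with $i$ even;
\item the discrete series characters $\theta_j$ of degree $q-1$, for $1\le j\le (q-1)/2$ with $j$ even;
\item two characters of degree $(q+1)/2$ if $q\equiv 1\pmod 4$, or two of degree $(q-1)/2$ if $q\equiv 3\pmod 4$.
\end{itemize}
The values of $\chi_i$ on the split torus of order $(q-1)/2$ in $\PSL_2(q)$ are $\rho^{ik}+\rho^{-ik}$, and they vanish or are rational elsewhere. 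Hence $\Q(\chi_i)=\Q(\zeta_d+\zeta_d^{-1})$, where $d$ is the order of the corresponding character of the cyclic torus of order $(q-1)/2$. The same holds for $\theta_j$ with the torus of order $(q+1)/2$.

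\textbf{Step 2: Galois orbits.} $\Gal$ acts on the parameters $i$ by multiplication by units modulo the torus order, and $\chi_i=\chi_{-i}$. So the orbits of principal series characters correspond to characters of the cyclic group of order $(q-1)/2$ of exact order $d$, taken up to inversion. The condition $d>2$ excludes the trivial and the order-two character, which give the reducible or split characters. This yields one orbit for each $d\in B$, with component $\M_{q+1}(\Q(\zeta_d+\zeta_d^{-1}))$. A counting check confirms this: $\sum_{d\in B}\varphi(d)/2=\big((q-1)/2-1-[\text{$(q-1)/2$ even}]\big)/2$ matches the number of $\chi_i$. The discrete series is handled identically with $A$.

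The two half-degree characters take the values $\tfrac{1}{2}(-1\pm\sqrt{\varepsilon q})$ on the unipotent classes, where $\varepsilon=(-1)^{(q-1)/2}$. If $\sqrt{\varepsilon q}$ is irrational, they form a single orbit with field $\Q(\sqrt{\varepsilon q})$. If $q\equiv1\pmod 4$ and $q$ is a square, they are two rational characters. When $q\equiv 3\pmod 4$, $q$ is never a square, and $\Q(\sqrt{-q})$ is always a quadratic field. This produces the three cases of the statement.

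\textbf{Step 3: Schur indices, the main obstacle.} We must show $m_\Q(\chi)=1$ for every $\chi$. For the trivial, Steinberg and principal series characters this is straightforward. Each of them is a constituent of a permutation-type induced representation $\mathrm{Ind}_{B}^{G}\lambda$ from the Borel subgroup with multiplicity one. Concretely, $\chi_i$ is induced from a linear character $\lambda$ of $B$ with $\Q(\lambda)=\Q(\zeta_d)$, and the multiplicity-one argument (Brauer–Speiser / Fein's lemma) gives $m_{\Q(\chi)}(\chi)=1$.

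The discrete series and half-degree characters are the delicate part. For these we would invoke Janusz's and Shahabi's known result that Schur indices of $\PSL_2(q)$ over $\Q$ are all $1$; the index $2$ phenomena occur only for faithful characters of $\SL_2(q)$. To give a direct argument instead, we would proceed in two steps. First, since the characters are real-valued, the Frobenius–Schur indicator is $+1$ for $\PSL_2(q)$, so the local index at infinity is $1$. Second, we would show the $p$-adic and $\ell$-adic indices vanish using the Brauer–Speiser theorem, which gives $m\le2$, together with a realization of $\chi$ inside an induced character from a dihedral or cyclic subgroup of degree coprime to $2$, or with odd multiplicity. Among these steps, producing that realization is the part we expect to require the most care.
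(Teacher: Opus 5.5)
Your route matches the paper's. You restrict the character table of $\SL_2(q)$ to the characters trivial on $-I$ and sort them into Galois orbits indexed by $d\in A$, $d\in B$ and the half-degree pair. You then use Janusz's theorem (Lemma~\ref{lemma:schurindexPSL}) that $m_\Q(\chi)=1$ for all $\chi\in\irr(\PSL_2(q))$, and read off $\M_{\chi(1)}(\Q(\chi))$ from Reiner's theorem. Your check of $\sum_{d\in B}\varphi(d)/2$ against the number of admissible parameters justifies the orbit parametrization more explicitly than the paper's ``analogous'' remark does. The proof is complete provided you use the cited theorem for \emph{every} character, because the Schur-index argument you supply yourself is not valid.

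\textbf{The principal-series argument.} You call $m_\Q(\chi_i)=1$ ``straightforward'' because $\chi_i=\mathrm{Ind}_B^G\lambda$ with multiplicity one.
\begin{itemize}
\item Multiplicity one only shows that $\chi_i$ is realized over $\Q(\chi_i,\lambda)=\Q(\zeta_d)$. Hence $m_{\Q(\chi_i)}(\chi_i)$ divides $[\Q(\zeta_d):\Q(\zeta_d+\zeta_d^{-1})]=2$, which does not give $1$.
\item The same reasoning applies verbatim to the characters $\psi_{q+1}^{(i)}$ of $\SL_2(q)$ with $i$ odd. These are also induced with multiplicity one from linear characters of the Borel subgroup, yet they have Schur index $2$ (Lemma~\ref{lemma:schurindexSL}).
\item A direct proof therefore has to use that $\chi_i$ is trivial on $-I$. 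One way is Janusz's description of the attached cyclic algebra with factor $(-1)^i$, which is trivial for even $i$.
\end{itemize}

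\textbf{Two smaller slips.}
\begin{itemize}
\item Your fallback argument for the remaining characters assumes they are real-valued. This fails for the two characters of degree $\frac{q-1}{2}$ when $q\equiv 3\pmod 4$, whose field is $\Q(\sqrt{-q})$, so Brauer--Speiser is not available there.
\item The general relation is $n\, m_\Q(\chi)=\chi(1)$, with $m_\Q(\chi)=\sqrt{[D:\Q(\chi)]}$. Your formula divides by the Schur index twice. This is harmless here only because every index is $1$.
\end{itemize}
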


A quaternion algebra over a field $\mathbb{F}$ with $\operatorname{char}(\mathbb{F}) \neq 2$ is a central simple algebra of dimension $4$, denoted $(a,b)_{\mathbb{F}}$, generated by elements $i, j$ satisfying
\[
i^2 = a, \quad j^2 = b, \quad ij = -ji, \quad a,b \in \mathbb{F}^\times.
\]

If $q$ is a power of $2$, then $\SL_2(q) = \PSL_2(q)$. Thus, with the above setup of division rings, we finally prove Theorem~\ref{thm:wedderburn SL(odd)}, which provides a formula depending only on $q$ for the Wedderburn decomposition of the rational group algebra of $\SL_2(q)$, where $q$ is a power of an odd prime $p$.
\begin{theorem}\label{thm:wedderburn SL(odd)}
	Let $G = \SL_2(q)$, where $q$ is a power of an odd prime $p$. Suppose that
\begin{align*}
        & A=\left\{\, 2 < d \leq \tfrac{q+1}{2} \, : \, d \mid \tfrac{q+1}{2} \, \right\}, \\
        & A'=\left\{\, 2 < d \leq q+1 \, : \, d \mid (q+1), \ d \nmid \tfrac{q+1}{2} \, \right\},\\
        & B=\left\{\, 2 < d \leq \tfrac{q-1}{2} \, : \, d \mid \tfrac{q-1}{2} \, \right\}, \, \, \, \,\text{and}\\
        & B'=\left\{\, 2 < d \leq q-1 \, : \, d \mid (q-1), \ d \nmid \tfrac{q-1}{2}  \, \right\}.
    \end{align*}
Furthermore, let us define the division algebras $D_d \cong (\zeta_{d}-\zeta_d^{-1}, -q)_{\Q(\zeta_{d}+\zeta_d^{-1})}$, $D'_d \cong  (\zeta_{d}-\zeta_d^{-1}, -1)_{\Q(\zeta_{d}+\zeta_d^{-1})}$, $D'\cong (-1,-1)_{\Q(\sqrt q)}$, and 
 $$D''\cong \begin{cases}
(-1,-p)_\Q &  \text{ for } \, \, \, \, q=p^{2m}\, \, \, \, \text{and} \, \, \, \, p\equiv 1\pmod 4, \\
 (-\ell,-p)_\Q & \text{ for } \, \, \, \, q=p^{2m}\, \, \, \, \text{and} \, \, \, \, p\equiv 3\pmod 4,
\end{cases}$$
where $\ell$ is a prime satisfying $\ell\equiv 3\pmod 4$ and the Legendre symbol $\left(\dfrac{\ell}{p}\right)$ takes value $-1$. Then we have the following.
    \begin{enumerate}
        \item {\bf Case ($q\equiv3\pmod{4}$).} In this case, the Wedderburn decomposition of $\mathbb{Q}G$ is given by
	\begin{align*}
	    \mathbb{Q}G \cong & \Q \bigoplus \M_q(\Q) \bigoplus \M_{\frac{q-1}{2}}(\Q(\sqrt{-q})) \bigoplus_{d \in A'} \M_{\frac{q-1}{2}}(D_d) \bigoplus_{d \in A} \M_{q-1}(\Q(\zeta_{d}+\zeta_d^{-1}))\\
        & \bigoplus \M_{\frac{q+1}{2}}(\Q(\sqrt{-q})) \bigoplus_{d \in B'} \M_{\frac{q+1}{2}}(D_d) \bigoplus_{d \in B} \M_{q+1}(\Q(\zeta_{d}+\zeta_d^{-1})).
	\end{align*}

    \item {\bf Case ($q\equiv1\pmod{4}$).} In this case, we have the following two subcases.
    \begin{enumerate}
        \item {\bf Subcase ($q$ is not a square)}. In this subcase, the Wedderburn decomposition of $\mathbb{Q}G$ is given by
        \begin{align*}
            \mathbb{Q}G \cong & \Q \bigoplus \M_q(\Q) \bigoplus \M_{\frac{q-1}{4}}(D') \bigoplus_{d \in A'} \M_{\frac{q-1}{2}}(D_d) \bigoplus_{d \in A} \M_{q-1}(\Q(\zeta_{d}+\zeta_d^{-1}))\\
            & \bigoplus \M_{\frac{q+1}{2}}(\Q(\sqrt{q})) \bigoplus_{d \in B'} \M_{\frac{q+1}{2}}(D_d) \bigoplus_{d \in B} \M_{q+1}(\Q(\zeta_{d}+\zeta_d^{-1})).
        \end{align*}

        \item {\bf Subcase ($q$ is a square)}. In this subcase, the Wedderburn decomposition of $\mathbb{Q}G$ is given by
        \begin{align*}
            \mathbb{Q}G \cong & \Q \bigoplus \M_q(\Q) \bigoplus 2\M_{\frac{q-1}{4}}(D'') \bigoplus_{d \in A'} \M_{\frac{q-1}{2}}(D_d) \bigoplus_{d \in A} \M_{q-1}(\Q(\zeta_{d}+\zeta_d^{-1}))\\
            & \bigoplus 2\M_{\frac{q+1}{2}}(\Q) \bigoplus_{d \in B'} \M_{\frac{q+1}{2}}(D_d) \bigoplus_{d \in B} \M_{q+1}(\Q(\zeta_{d}+\zeta_d^{-1})).
        \end{align*}
    \end{enumerate}
    \end{enumerate}
\end{theorem}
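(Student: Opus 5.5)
The plan is to use the standard character-theoretic route. By Wedderburn--Artin together with the theory of Schur indices, the simple components of $\Q G$ correspond to Galois orbits of complex irreducible characters $\chi$ of $G=\SL_2(q)$. Each orbit contributes one component $\M_{n}(D)$ with three properties: the centre of $D$ is $\Q(\chi)$; $D$ has index $m_\Q(\chi)$, the Schur index; and $n\, m_\Q(\chi)=\chi(1)$.

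The proof therefore splits into three steps.
\begin{enumerate}
\item Write down the character table of $\SL_2(q)$ for odd $q$. This is the standard Jordan/Schur table: the trivial character, the Steinberg character of degree $q$, the principal series characters $\chi_i$ of degree $q+1$ with $1\le i\le \frac{q-3}{2}$, the discrete series characters $\theta_j$ of degree $q-1$ with $1\le j\le \frac{q-1}{2}$, and the four characters $\xi_1,\xi_2,\eta_1,\eta_2$ of degrees $\frac{q+1}{2}$ and $\frac{q-1}{2}$.
\item Compute the character fields and group the characters into Galois orbits.
\item Determine the Schur indices and identify the division algebras.
\end{enumerate}

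\textbf{Step 2 (fields and orbits).} The values of $\chi_i$ on the split torus are $\zeta_{q-1}^{ik}+\zeta_{q-1}^{-ik}$. Hence $\Q(\chi_i)=\Q(\zeta_d+\zeta_d^{-1})$, where $d=(q-1)/\gcd(i,q-1)$. The characters with the same $d$ form a single Galois orbit, of size $\varphi(d)/2$, so $d$ runs over the divisors of $q-1$ with $d>2$. The central character of $\chi_i$ is $(-1)^i$. Faithful characters (those with $-1$ acting nontrivially) correspond exactly to $d\mid q-1$ with $d\nmid \frac{q-1}{2}$, which is the set $B'$. The non-faithful ones give $d\in B$, recovering the $\PSL_2(q)$ components of Theorem~\ref{thm:wedderburn PSL(odd)}. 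The same analysis for $\theta_j$, using the nonsplit torus of order $q+1$, produces the sets $A$ and $A'$. For the four small characters, the values involve $\frac{-1\pm\sqrt{\varepsilon q}}{2}$ with $\varepsilon=(-1)^{(q-1)/2}$, so their fields are $\Q(\sqrt{\varepsilon q})$. Which pair is faithful depends on $q \bmod 4$. When $q$ is a square, $\sqrt q\in\Q$, so each pair splits into two rational orbits. This accounts for the multiplicities $2$ appearing in case (2)(b).

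\textbf{Step 3 (Schur indices), the main obstacle.} For non-faithful characters, Theorem~\ref{thm:wedderburn PSL(odd)} already gives Schur index $1$. For faithful characters I will invoke the known result (Janusz, Shahabi) that every irreducible character of $\SL_2(q)$ has Schur index at most $2$. Whether the index is $1$ or $2$ is then decided by local invariants at the real places and at $p$. At a real place the quaternion algebra ramifies exactly when the character is real and faithful; this is the Frobenius--Schur indicator $-1$, since $-1$ acts as $-1$ on a real faithful representation of this group. At $p$ one uses the Brauer--Speiser/Benard considerations together with the explicit local computation of Janusz.

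To name the algebras, I will realize each faithful $\chi_i$ or $\theta_j$ inside an induced representation from the torus normalizer. The torus normalizer is a generalized quaternion-type group generated by the torus $\langle t\rangle$ and an element $w$ with $w^2=-1$ and $wtw^{-1}=t^{-1}$. This gives a cyclic algebra whose generators square to $(\zeta_d-\zeta_d^{-1})^2$ and $-1$, which explains $D'_d$. Twisting by the Gauss-sum relation (the Weil representation) replaces $-1$ by $-q$, which explains $D_d$. The Hilbert symbols then have to be checked against the local invariants just computed.

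For the small characters of degree $\frac{q-1}{2}$ with $q\equiv1\pmod 4$, the characters are totally real with indicator $-1$. The component is therefore a quaternion algebra, ramified at both real places of $\Q(\sqrt q)$ and nowhere else, which is $(-1,-1)_{\Q(\sqrt q)}$. When $q$ is a square, the algebra is a rational quaternion algebra ramified at $\infty$ and at $p$. This is $(-1,-p)_\Q$ if $p\equiv1\pmod 4$, and $(-\ell,-p)_\Q$ with $\ell$ a suitable nonresidue prime if $p\equiv 3\pmod 4$. Identifying these uses Hilbert reciprocity, and I expect this step to be the most delicate. Finally, the matrix sizes follow from $n=\chi(1)/m_\Q(\chi)$; for example, $\frac{q-1}{2}/2=\frac{q-1}{4}$ and $(q\pm1)/2$.
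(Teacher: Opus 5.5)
The outline is the paper's own route: Galois orbits of the Jordan--Schur characters, Schur indices from Janusz/Shahabi, and quaternion algebras pinned down by local invariants. Steps~1--2 and the matrix sizes $n=\chi(1)/m_\Q(\chi)$ are fine.

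The gap is in Step~3, and it cannot be closed as planned, because several of the target algebras are wrong. The decisive failure is the sentence ``twisting by the Gauss-sum relation (the Weil representation) replaces $-1$ by $-q$.'' You give no mechanism, and your own torus-normalizer idea, made precise, refutes it.

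Let $T'\cong C_{q+1}$ be the nonsplit torus and $N'=T'\langle w\rangle$ its normalizer, with $w^2=-I$. Let $\nu$ generate $\widehat{T'}$, so that $\psi^{(j)}_{q-1}(t)=-(\nu^j(t)+\nu^{-j}(t))$ for $t\in T'\setminus Z$, and put $\Phi_j=\operatorname{Ind}_{T'}^{N'}\nu^j$. For $j$ odd, $\psi^{(j)}_{q-1}$ vanishes on $N'\setminus T'$, which consists of elements of order $4$. Hence $\psi^{(j)}_{q-1}|_{N'}=-\Phi_j+\operatorname{Ind}_Z^{N'}(\nu^j|_Z)$, and so $\langle\psi^{(j)}_{q-1}|_{N'},\Phi_j\rangle=1$.

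Since $\Q(\Phi_j)=\Q(\psi^{(j)}_{q-1})=K:=\Q(\zeta_d+\zeta_d^{-1})$, the two characters have the same local Schur indices at every place. So for $d\in A'$ the division algebra is $((\zeta_d-\zeta_d^{-1})^2,\nu^j(w^2))_K=((\zeta_d-\zeta_d^{-1})^2,-1)_K$. This is exactly what happens for $d\in B'$, where the multiplicity on the split-torus normalizer is $3$.

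This algebra differs from the $-q$ version by $((\zeta_d-\zeta_d^{-1})^2,q)_K$. That algebra is ramified at every prime of $K$ above $p$ whenever $q$ is an odd power of $p$. Indeed, $q\equiv-1\pmod d$ puts complex conjugation in $\langle p\rangle\subseteq(\Z/d\Z)^\times$, so those primes are inert in $\Q(\zeta_d)$.

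Two concrete failures:
\begin{itemize}
\item For $q=3$: $\SL_2(3)$ is the binary tetrahedral group, and the component of its rational faithful degree-$2$ character is $(-1,-1)_\Q$, ramified at $\{2,\infty\}$. But $D_4=(-4,-3)_\Q\cong(-1,-3)_\Q$ is ramified at $\{3,\infty\}$. (The first entry must be the square, since $\zeta_d-\zeta_d^{-1}\notin K$.)
\item For $q=5$, $d=6$: the component is $\M_2((-1,-3)_\Q)$, not $\M_2((-3,-5)_\Q)$.
\end{itemize}
Note also that the finite ramification of these series components lies above primes dividing $d$, never above $p$. Checking only $\infty$ and $p$ would miss it.

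The identifications for the small characters fail as well.
\begin{itemize}
\item For $q$ not a square: when $p\equiv1\pmod 8$, a quaternion algebra over $\Q(\sqrt q)$ ramified only at the two real places is not $(-1,-1)_{\Q(\sqrt q)}$. The two places above $2$ then have completion $\Q_2$, where $(-1,-1)$ does not split. For example, for $q=17$, $\psi'_-$ restricts irreducibly to the Borel subgroup $U\rtimes T$ as $\operatorname{Ind}_{U\times Z}^{U\rtimes T}(\lambda\times\mathrm{sgn})$ for a suitable nontrivial $\lambda\in\widehat U$. Its algebra $(\Q(\zeta_{17})/\Q(\sqrt{17}),\sigma,-1)$ is unramified at every finite place: away from $17$ the extension is unramified and $-1$ is a unit, and parity handles $17$.
\item For $q$ a square: $(-1,-p)_\Q$ is ramified at $\{p,\infty\}$ only if $p\equiv3\pmod4$; for $p\equiv1\pmod4$ it is ramified at $\{2,\infty\}$. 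Also $(-\ell,-p)_p=\bigl(\tfrac{-\ell}{p}\bigr)$ equals $-1$ only if $p\equiv1\pmod4$. So your two cases, copied from the displayed statement, are swapped. The paper's lemma on division algebras has them the right way round.
\end{itemize}

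The paper's proof does not establish these points either. It imports the $-q$ cyclic algebra, and it argues that $-1$, being a unit, gives no finite ramification, which fails at $2$. What actually settles the finite places is the tool used above: restriction to $N(T)$, $N(T')$ or the Borel subgroup, with odd multiplicity and equal character fields. Carried out, it corrects the statement rather than proving it.

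A minor point: ``$-1$ acts as $-1$'' does not by itself force Frobenius--Schur indicator $-1$. Cite the $\SL_2(q)$-specific fact that a real $\chi$ has indicator $\chi(-I)/\chi(1)$, or use Shahabi's table.
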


The organization of the article is as follows. In Section~\ref{sec:preliminaries}, we introduce the main notation and prerequisites that are ubiquitous throughout this article. In Section~\ref{sec:prelim}, we address the results concerning simple $\Q G$-modules for $G = \PSL_2(q)$, and present the proof of Theorems~\ref{thm:wedderburn PSL(even)} and~\ref{thm:wedderburn PSL(odd)}. Finally, in Section~\ref{sec:SLresult}, we address the results concerning simple $\Q G$-modules for $G = \SL_2(q)$, and present the proof of Theorem~\ref{thm:wedderburn SL(odd)}.

\section{Preliminaries}\label{sec:preliminaries}
	In this section, we introduce the main notation and prerequisites used throughout the paper. Unless stated otherwise, $p$ denotes a prime number, and $\zeta_m$ denotes a primitive $m$th root of unity in $\mathbb{C}$. For a finite group $G$, we denote by $\irr(G)$ the set of irreducible complex characters of $G$, and by $\cd(G)=\{\chi(1) : \chi\in \irr(G)\}$ the set of character degrees of $G$. For $\chi\in \irr(G)$, we write $m_{\mathbb{Q}}(\chi)$ for the Schur index of $\chi$ over $\mathbb{Q}$. For a character $\chi\in\irr(G)$, we define $\Q(\chi)$ to be the field extension of $\Q$ generated by the character values $\chi(g)$ for all $g\in G$.
    We further define $\Omega(\chi)=m_{\mathbb{Q}}(\chi)\sum_{\sigma\in \mathrm{Gal}(\mathbb{Q}(\chi)/\mathbb{Q})}\chi^\sigma.$
We denote by $\irr_{\mathbb{Q}}(G)$ the set of irreducible rational characters of $G$. 
All other notations used in this article are standard.

 \subsection{Schur theory}\label{sec:Schur}
    	Let $G$ be a finite group. Define an equivalence relation on $\irr(G)$ by Galois conjugacy over $\mathbb{Q}$. Two irreducible characters $\chi, \psi \in \irr(G)$ are called \emph{Galois conjugates} over $\mathbb{Q}$ if there exists $\sigma \in \Gal(\mathbb{Q}(\chi)/\mathbb{Q})$ such that $\chi^\sigma = \psi$, where $\chi^\sigma(g)=\sigma(\chi(g))$ for each $g \in G$.
	
	\begin{lemma}\cite[Lemma~9.17]{I}\label{SC}
		Let $G$ be a finite group and $\chi \in \irr(G)$. Denote by $E(\chi)$ the Galois conjugacy class of $\chi$ over $\mathbb{Q}$. Then
		\[
		|E(\chi)| = [\mathbb{Q}(\chi) : \mathbb{Q}].
		\]
	\end{lemma}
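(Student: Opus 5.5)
The statement to prove is Lemma~\ref{SC}: the Galois conjugacy class $E(\chi)$ of $\chi\in\irr(G)$ over $\Q$ has exactly $[\Q(\chi):\Q]$ elements. I would prove it with the orbit--stabilizer theorem, after identifying the right Galois group.

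First I set up the group. Let $n=|G|$. Every value $\chi(g)$ is a sum of $n$-th roots of unity, so $\Q(\chi)\subseteq\Q(\zeta_n)$. The extension $\Q(\zeta_n)/\Q$ is Galois with abelian group $\mathcal{G}=\Gal(\Q(\zeta_n)/\Q)$. Hence every subfield, in particular $\Q(\chi)$, is Galois over $\Q$. Every $\sigma\in\Gal(\Q(\chi)/\Q)$ extends to some $\tilde\sigma\in\mathcal{G}$, and $\chi^{\tilde\sigma}=\chi^\sigma$.

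Next I check that $\chi^\sigma$ is again an irreducible character. Take a representation $\rho$ affording $\chi$; we may realize it over $\Q(\zeta_n)$ (Brauer), or more simply over a finite Galois extension $K$ of $\Q$ containing all matrix entries. Extend $\sigma$ to $K$ and apply it entrywise to get a representation $\rho^\sigma$ with character $\chi^\sigma$. Irreducibility can be avoided entirely:
\[
\langle\chi^\sigma,\chi^\sigma\rangle=\sigma(\langle\chi,\chi\rangle)=1,
\]
because the inner product is a $\Q$-linear expression in the values $\chi(g)\overline{\chi(g)}$, and complex conjugation commutes with $\sigma$ on the abelian field $\Q(\zeta_n)$. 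So $\Gal(\Q(\chi)/\Q)$ acts on $\irr(G)$, and $E(\chi)$ is exactly the orbit of $\chi$.

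Then I compute the stabilizer. If $\chi^\sigma=\chi$, then $\sigma$ fixes every value $\chi(g)$; these values generate $\Q(\chi)$, so $\sigma$ is the identity. The stabilizer is therefore trivial, and
\[
|E(\chi)|=|\Gal(\Q(\chi)/\Q)|=[\Q(\chi):\Q],
\]
the last equality because $\Q(\chi)/\Q$ is Galois.

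The only point needing care, and the main obstacle, is that $\chi^\sigma$ is a character at all, which is what justifies calling the orbit a subset of $\irr(G)$. I would handle it by the entrywise Galois action on a representation defined over a number field, as above. The rest is formal.
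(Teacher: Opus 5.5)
Your argument is correct. It is the standard orbit--stabilizer proof of this fact; the paper gives no proof of its own and simply cites \cite[Lemma~9.17]{I}, where the argument is essentially yours. One caution: your ``more simply'' route assumes the matrix entries of $\rho$ are algebraic, which holds only after replacing $\rho$ by an equivalent representation over $\overline{\Q}$ (for instance via Brauer's theorem, as you also suggest), or else you should extend $\sigma$ to an automorphism of $\mathbb{C}$ before applying it entrywise.
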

	
	Let $G$ be a finite group, and let $\chi \in \operatorname{Irr}(G)$. Then Schur theory implies that there exists an irreducible $\mathbb{Q}$-representation $\rho$ of $G$ that affords the character $\Omega(\chi)$. Moreover, we have
\[
\rho(1) = \Omega(\chi)(1) = m_{\mathbb{Q}}(\chi)\,[\mathbb{Q}(\chi) : \mathbb{Q}]\,\chi(1).
\]
It follows that distinct Galois conjugacy classes correspond to distinct irreducible rational representations of $G$. Reiner~\cite[Theorem~3]{Reiner} further describes the structure of the simple components in the Wedderburn decomposition of the rational group algebra $\mathbb{Q}G$ associated with these rational representations. By the direct sum of representations, we mean the standard construction given by the block-diagonal sum in matrix form.

	\begin{lemma}\cite[Theorem~3]{Reiner} \label{Reiner}
		Let $\mathbb{K}$ be a field of characteristic zero and let $\overline{\mathbb K}$ denote its algebraic closure. Suppose $T$ is an irreducible $\mathbb{K}$-representation of $G$, extended linearly to a $\mathbb{K}$-representation of $\mathbb{K}G$. Set
		\[
		A = \{ T(x) : x \in \mathbb{K}G \}.
		\]
		Then $A$ is a simple $\mathbb{K}$-algebra and hence is isomorphic to $M_n(D)$ for some positive integer $n$ and some division algebra $D$ over $\mathbb{K}$.
		
    		Moreover, after extension of scalars to $\overline{\mathbb K}$, the representation $T$ decomposes as a direct sum
		\[
		T\otimes_{\mathbb{K}}\overline{\mathbb K}
		\;\cong\;
		m_{\mathbb{K}}(\chi)\bigoplus_{\sigma\in\Gal(\mathbb{K}(\chi)/\mathbb{K})} U^\sigma,
		\]
		where $U$ is an irreducible $\overline{\mathbb K}$-representation of $G$ with character $\chi$, 
		$U^\sigma$ affords the character $\chi^\sigma$, and 
		$m_{\mathbb{K}}(\chi)$ denotes the Schur index of $\chi$ over $\mathbb{K}$.  
		In this case, the center of $D$ satisfies
		\[
		Z(D)\cong \mathbb{K}(\chi),
		\]
		and the degree of $D$ over its center is given by
		\[
		[D:Z(D)]=\bigl(m_{\mathbb{K}}(\chi)\bigr)^2.
		\]
		
	\end{lemma}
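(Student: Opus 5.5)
The plan is to derive everything from Maschke's theorem, the Wedderburn--Artin theorem, and a Galois-descent comparison between $A$ and $A\otimes_{\mathbb{K}}\overline{\mathbb K}$. Throughout, $V$ denotes the irreducible $\mathbb{K}G$-module affording $T$.

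\textbf{Simplicity of $A$.} Since $\operatorname{char}\mathbb{K}=0$, Maschke's theorem makes $\mathbb{K}G$ semisimple. The map $x\mapsto T(x)$ identifies $A$ with $\mathbb{K}G/\operatorname{Ann}(V)$, and $\operatorname{Ann}(V)$ is a two-sided ideal. Hence $A$ is semisimple and possesses the faithful simple module $V$. A semisimple algebra with a faithful simple module has a single Wedderburn block, so $A$ is simple. Wedderburn--Artin then gives $A\cong \M_n(D)$ with $D\cong \operatorname{End}_A(V)^{\mathrm{op}}$, and $V\cong D^n$ as a module.

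\textbf{Decomposition over $\overline{\mathbb K}$ and the center.} Let $e$ be the primitive central idempotent of $\mathbb{K}G$ with $A=\mathbb{K}Ge$. Over $\overline{\mathbb K}$, $e$ splits as a sum of primitive central idempotents $e_\psi=\frac{\psi(1)}{|G|}\sum_g\psi(g^{-1})g$, $\psi\in\irr(G)$. Since $e$ has coefficients in $\mathbb{K}$, the set of $\psi$ that occur is stable under $\Gal(\overline{\mathbb K}/\mathbb{K})$. Minimality of $e$ forces this set to be a single Galois orbit $\{\chi^\sigma\}$, and, as in Lemma~\ref{SC}, the orbit has $[\mathbb{K}(\chi):\mathbb{K}]$ elements. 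Consequently
\[
A\otimes_{\mathbb K}\overline{\mathbb K}\cong\bigoplus_{\sigma}\M_{\chi(1)}(\overline{\mathbb K}),
\]
and $Z(A)\otimes\overline{\mathbb K}\cong\overline{\mathbb K}^{[\mathbb{K}(\chi):\mathbb{K}]}$. For the center itself, the central character $\omega_\chi$ maps the class sums to $|\mathrm{cl}(g)|\chi(g)/\chi(1)$. This gives an embedding $Z(A)\hookrightarrow\mathbb{K}(\chi)$ whose image contains all $\chi(g)$, so $Z(D)=Z(A)\cong\mathbb{K}(\chi)$. In particular $V\otimes\overline{\mathbb K}$ involves only the $U^\sigma$.

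\textbf{Multiplicities and the Schur index.} Applying $\sigma$ to $V\otimes\overline{\mathbb K}$ permutes the constituents $U^\sigma$. Since $V$ is defined over $\mathbb{K}$, all $U^\sigma$ occur with a common multiplicity $m$. Comparing dimensions gives two equations. From $V\cong D^n$,
\[
\dim_{\mathbb K}V=n\,[D:\mathbb{K}(\chi)]\,[\mathbb{K}(\chi):\mathbb{K}],
\]
while from the decomposition over $\overline{\mathbb K}$, $\dim_{\mathbb K}V=m[\mathbb{K}(\chi):\mathbb{K}]\chi(1)$. The dimension count for $A$ gives
\[
n^2[D:\mathbb{K}(\chi)]=\chi(1)^2.
\]
Writing $[D:Z(D)]=s^2$, these yield $\chi(1)=ns$ and $m=s$.

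It remains to show $s=m_{\mathbb K}(\chi)$; this is the step I expect to be the main obstacle. I would use that a maximal subfield $L\supseteq\mathbb{K}(\chi)$ of $D$, of degree $s$, splits $D$. Then $U$ is realized over $L$, which gives $m_{\mathbb K}(\chi)\le s$. Conversely, suppose $U$ is realized over a field $E\supseteq\mathbb{K}(\chi)$. Restriction of scalars shows that $V$'s block contains a module whose dimension forces $s\mid[E:\mathbb{K}(\chi)]$. Since the Schur index is the minimal such degree, $s\le m_{\mathbb K}(\chi)$, and so $m=s=m_{\mathbb K}(\chi)$. This yields the stated decomposition $T\otimes\overline{\mathbb K}\cong m_{\mathbb K}(\chi)\bigoplus_\sigma U^\sigma$ and $[D:Z(D)]=m_{\mathbb K}(\chi)^2$.
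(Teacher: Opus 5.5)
Your proof is correct. It necessarily takes a different route from the paper, because the paper proves nothing here: the lemma is quoted from Reiner's Theorem~3 and used as a black box.

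You rebuild the result from Maschke and Wedderburn--Artin. You descend the primitive central idempotent to see that $e=\sum_\sigma e_{\chi^\sigma}$ runs over a single Galois orbit, and you identify $Z(A)$ with $\mathbb{K}(\chi)$ via the central character $\omega_\chi$. The two dimension counts for $V$ and $A$ then force $\chi(1)=ns$ and $m=s$, where $s^2=[D:Z(D)]$. The citation buys brevity; your route buys two things.

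\textbf{Conventions.} Your argument makes explicit the definition of $m_{\mathbb K}(\chi)$, which the paper never fixes. With Isaacs' definition (the multiplicity of $\chi$ in the irreducible $\mathbb K$-character containing it), your final paragraph is superfluous, since $m=s$ is already the claim. With the ``minimal degree over $\mathbb K(\chi)$ of a realizing field'' definition, your maximal-subfield and restriction-of-scalars argument is exactly what is needed. There, the step ``$U$ is realized over $L$'' wants one more line: $A\otimes_{\mathbb K(\chi)}L\cong\M_{ns}(L)=\M_{\chi(1)}(L)$ yields an absolutely irreducible $L$-representation with central character $\omega_\chi$, hence one affording $\chi$.

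\textbf{A useful by-product.} Your dimension count gives $n=\chi(1)/m_{\mathbb K}(\chi)$. This is precisely what the later computations for $\SL_2(q)$ require, e.g.\ the component $\M_{\frac{q-1}{2}}(D_d)$ for characters of degree $q-1$ with Schur index $2$. The paper's proofs only appeal to Jespers--del R\'io for $n=\chi(1)$ in the Schur index $1$ case.
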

\subsection{Character tables of $\SL_2(q)$ and $\PSL_2(q)$}\label{sec:conj-char}
The results in this subsection are well known; we include them for the reader's convenience, as the use of information from the character tables of these groups is ubiquitous throughout this article. 

Let $\tau$ be a generator of $\fq_{q^2}^\times$ and fix $\sigma=\tau^{q+1}$, $\tau_0=\tau^{q-1}$. Then define
\begin{align*}
    S(a)  = \begin{pmatrix}
        \sigma^a & \\ & \sigma^{-a}
    \end{pmatrix} \text{and, }
    T(b)  = \begin{pmatrix}
      ~ & -1\\ 1& \tau_0^{b}+\tau_0^{bq}
    \end{pmatrix}.
\end{align*}
Also if $\eta$ is a non-square in $\fq_q$, set $N'=\begin{pmatrix}
    1 & \eta \\ &1
\end{pmatrix}$ and set $N=\begin{pmatrix}
    1 & 1 \\ & 1
\end{pmatrix}.$ Fix $I=\begin{pmatrix}
        1 & \\ & 1
    \end{pmatrix}$. 
We consider the case when the field is of odd size. 
%The even case is discussed in the next section as $\SL_2(q)\cong\PSL_2(q)$ therein. 
Fix $\epsilon \in \mathbb{C}$ to be a primitive $q-1$ root of unity and $\eta_0 \in \mathbb{C}$ to be a primitive $q+1$ root of unity.
\subsubsection{The case of $\SL_2(q)$} Let $\delta=(-1)^{(q-1)/2}$, $\omega=\dfrac{1+\sqrt{\delta q}}{2}$, and $\omega^*=\dfrac{1-\sqrt{\delta q}}{2}$. We have the following table for $q\geq 5$; see \cite[p. 106]{GeMa20}.
\begin{table}[H]
  \setcellgapes{5pt}\makegapedcells
  \centering%
  \begin{tabular}{*{9}{c}}
    \hline
    $x^G$& $I$ & $-I$ & $N$ & $N'$ & $-N$ &$-N'$&\makecell{$S(a)$} & \makecell{$T(b)$}\\
\hline
    $\psi_1$& $1$ & $1$ & $1$ & $1$ & $1$ & $1$ & $1$ & $1$\\
    $\psi_q$& $q$ & $q$ & $\cdot$ & $\cdot$ & $\cdot$ & $\cdot$ & $1$ & $-1$\\
    $\psi_{q+1}^{(i)}$& $q+1$ & $(-1)^i(q+1)$ & $1$ & $1$ & $(-1)^i$ & $(-1)^i$ & $\e^{ai}+\e^{-ai}$ & $\cdot$\\
    $\psi_{q-1}^{(j)}$& $q-1$ & $(-1)^j(q-1)$ & $-1$ & $-1$ & $(-1)^{j+1}$ & $(-1)^{j+1}$ & $\cdot$ & $\eta_0^{bj}+\eta_0^{-bj}$\\
    $\psi_+'$& $\dfrac{q+1}{2}$ & $\dfrac{\delta(q+1)}{2}$ & $\omega$ & $\omega^*$ & $\delta\omega$ & $\delta\omega^*$ & $(-1)^a$ & $\cdot$\\
    $\psi_+''$& $\dfrac{q+1}{2}$ & $\dfrac{\delta(q+1)}{2}$ & $\omega^*$ & $\omega$ & $\delta\omega^*$ & $\delta\omega$ & $(-1)^a$ & $\cdot$\\
    $\psi_-'$& $\dfrac{q-1}{2}$ & $-\dfrac{\delta(q-1)}{2}$ & $-\omega^*$ & $-\omega$ & $\delta\omega^*$ & $\delta\omega$ & $\cdot$ & $(-1)^{b+1}$\\
    $\psi_-''$& $\dfrac{q-1}{2}$ & $-\dfrac{\delta(q-1)}{2}$ & $-\omega$ & $-\omega^*$ & $\delta\omega$ & $\delta\omega^*$ & $\cdot$ & $(-1)^{b+1}$\\
     \hline
  \end{tabular}
      \caption{Character table of $\SL_2(q)$, }\label{table-sl(q)}
\end{table}In \Cref{table-sl(q)}, we have the parameters $1\leq i\leq\dfrac{q-3}{2}$ and $1\leq j\leq\dfrac{q-1}{2}$.
\subsubsection{The case of $\PSL_2(q)$} First, we look at the case when $q$ is a power of $2$.
Then the conjugacy classes of $\PSL_2(q)=\SL_2(q)$ have representatives as
\begin{align*}
    I=\begin{pmatrix}
        1 & \\ & 1
    \end{pmatrix},
      N=  \begin{pmatrix}
        1 & 1\\ & 1
    \end{pmatrix},
         S(a), T(b),
\end{align*}
where $1\leq a\leq \frac{q}{2}-1$, $1\leq b\leq \frac{q}{2}$. Since there are $q+1$ conjugacy classes, $\PSL_2(q)$ has $q+1$ irreducible representations up to equivalence. We have the representations 
$\psi_1$ and $\psi_q$ of dimensions $1$ and $q$ respectively. For each 
$1\leq k\leq \frac{q}{2}-1$ we have representations $\psi^{(k)}_{q+1}$ of dimension
$q+1$ and for each $1\leq j\leq \frac{q}{2}$, we have representations
$\psi^{(j)}_{q-1}$ of dimension $q-1$. The description of these representations 
can be found in \cite[pp. 104-105]{GeMa20}. We present the character table in \cref{table-psl(2)}. 
\begin{table}[H]
\setcellgapes{5pt}\makegapedcells
  \centering%
  \begin{tabular}{*{5}{c}}
    \hline
     &    &     & $1\leq a\leq \frac{q}{2}-1$ & $1\leq b\leq \frac{q}{2}$\\
    $x$& $I$ & $N$ & $S(a)$ & $T(b)$ \\
     $|x^G|$ & $1$ & $q^2-1$ & $q(q+1)$ & $q(q-1)$\\
     \hline
    $\psi_1$ & $1$ & $1$ & $1$ & $1$ \\
    % \hline
    $\psi_q$ & $q$ & $\cdot$ & $1$ & $-1$ \\
    % \hline
    $\psi_{q+1}^{(k)}$ & $q+1$ & $1$ & $\epsilon^{ak}+\epsilon^{-ak}$ &  $\cdot$\\
    % \hline
    $\psi_{q-1}^{(j)}$ & $q-1$ & $-1$ &  $\cdot$ & $-\eta_0^{bj}-\eta_0^{-bj}$ \\
    \hline
  \end{tabular}
    \caption{Character table of $\PSL_2(2^m)$}\label{table-psl(2)}
\end{table}
Similarly when $q$ is odd, the group $\PSL_2(q)$ has $\frac{q+5}{2}$ conjugacy classes. Hence $\PSL_2(q)$ has $\frac{q+5}{2}$ irreducible representations upto equivalence. These representations are obtained by looking at non-faithful irreducible representations of $\SL_2(q)$.

In \cref{table-psl(q)34} we present the character table of $\PSL_2(q)$ in case $q\equiv 3\pmod{4}$, where we have $k=2,4,\cdots, \frac{q-3}{2}$, $j=2,4,\cdots, \frac{q-3}{2}$, $\omega=\frac{1+\sqrt{-q}}{2}$ and $\omega^*=\frac{1-\sqrt{-q}}{2}$.
\begin{table}[H]
  \setcellgapes{5pt}\makegapedcells
  \centering%
  \begin{tabular}{*{7}{c}}
    \hline
    & & & & $1\leq a\leq \dfrac{q-3}{4}$ & $1\leq b\leq\dfrac{q-3}{4}$ &\\
    $x$& $I$ & $N$ & $N'$ & \makecell{$S(a)$} & \makecell{$T(b)$} & $T\left(\frac{q+1}{4}\right)$ \\
    $|x^G|$& $1$ & $\dfrac{q^2-1}{2}$ & $\dfrac{q^2-1}{2}$ & $q(q+1)$ & ${q(q-1)}$ & $\dfrac{q(q-1)}{2}$ \\
     \hline
    $\psi_1$ & $1$ & $1$ & $1$ & $1$ & $1$ & $1$ \\
    $\psi_q$ & $q$ & $\cdot$ & $\cdot$ & $1$ & $-1$ & $-1$ \\
    $\psi_{q+1}^{(k)}$ & $q+1$ & $1$ & $1$ & $\epsilon^{ak}+\epsilon^{-ak}$ & $\cdot$ & $\cdot$\\
    $\psi_{q-1}^{(j)}$ & $q-1$ & $-1$ & $-1$ & $\cdot$ & $-\eta_0^{bj}-\eta_0^{-bj}$ & $-2\eta_0^{\frac{(q+1)j}{4}}$ \\
    $\psi_-'$ & $\frac{q-1}{2}$ & $-\omega^*$ & $-\omega$ & $\cdot$ & $(-1)^{b+1}$ & $(-1)^{\frac{q+5}{4}}$ \\
    $\psi_-''$ & $\frac{q-1}{2}$ & $-\omega$ & $-\omega^*$ & $\cdot$ & $(-1)^{b+1}$ & $(-1)^{\frac{q+5}{4}}$ \\
    \hline
  \end{tabular}
      \caption{Character table of $\PSL_2(q)$, $q\equiv3\pmod{4}$}\label{table-psl(q)34}
\end{table}
\begin{table}[H]
  \setcellgapes{5pt}\makegapedcells
  \centering%
  \begin{tabular}{*{7}{c}}
    \hline
    &&&& $1\leq a\leq \dfrac{q-5}{4}$ && $1\leq b\leq \dfrac{q-1}{4}$\\
    $x$& $I$ & $N$ & $N'$ & \makecell{$S(a)$} & $S\left(\frac{q-1}{4}\right)$ & \makecell{$T(b)$} \\
    $|x^G|$& $1$ & $\dfrac{q^2-1}{2}$ & $\dfrac{q^2-1}{2}$ & $q(q+1)$ & $\dfrac{q(q+1)}{2}$ & $q(q-1)$ \\
     \hline
    $\psi_1$ & $1$ & $1$ & $1$ & $1$ & $1$ & $1$ \\
    $\psi_q$ & $q$ & $\cdot$ & $\cdot$ & $1$ & $1$ & $-1$ \\
    $\psi_{q+1}^{(k)}$ & $q+1$ & $1$ & $1$ & $\epsilon^{ak}+\epsilon^{-ak}$ & $2\epsilon^{\frac{(q-1)k}{4}}$ & $\cdot$\\
        $\psi_{q-1}^{(j)}$ & $q-1$ & $-1$ & $-1$ & $\cdot$ & $\cdot$ & $-\eta_0^{bj}-\eta_0^{-bj}$ \\
    $\psi_+'$ & $\frac{q+1}{2}$ & $\omega$ & $\omega^*$ & $(-1)^a$ & $(-1)^{\frac{q-1}{4}}$ & $\cdot$ \\
    $\psi_+''$ & $\frac{q+1}{2}$ & $\omega^*$ & $\omega$ & $(-1)^a$ & $(-1)^{\frac{q-1}{4}}$ & $\cdot$ \\
    \hline
  \end{tabular}

  \caption{Character table of $\PSL_2(q)$, $q\equiv1\pmod{4}$}\label{table-psl(q)14}
\end{table}
In \cref{table-psl(q)14} we present the character table of $\PSL_2(q)$ in case $q\equiv 1\pmod{4}$, where we have $k=2,4,\cdots, \frac{q-5}{2}$, $j=2,4,\cdots, \frac{q-1}{2}$, $\omega=\frac{1+\sqrt{q}}{2}$ and $\omega^*=\frac{1-\sqrt{q}}{2}$.

\section{The results for the group $\PSL_2(q)$}\label{sec:prelim}

In this section, we present proofs of Theorems~\ref{thm:wedderburn PSL(even)} and~\ref{thm:wedderburn PSL(odd)}. We first determine the number of pairwise non-isomorphic simple $\Q G$-modules of each possible dimension of $\PSL_2(q)$, as these are central to the proofs of the theorems. We begin with the following results.
\begin{lemma}\cite{Janusz} \label{lemma:schurindexPSL}
     Let $G=\PSL_2(q)$, where $q=p^m$ for some prime $p$. Then $m_{\Q}(\chi)=1$ for each $\chi \in \irr(G)$.
\end{lemma}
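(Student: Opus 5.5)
We have to show that every irreducible complex character of $\PSL_2(q)$ has Schur index $1$ over $\Q$. The plan is to run through the character tables (\cref{table-psl(2)}, \cref{table-psl(q)34}, \cref{table-psl(q)14}) family by family. For each family we either exhibit a rational realization or use a local obstruction argument.

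\textbf{Characters induced from Borel subgroups.} The trivial character $\psi_1$ and the Steinberg character $\psi_q$ are constituents of the permutation character $1_B^G$ on the projective line, where $B$ is a Borel subgroup. We have $1_B^G=\psi_1+\psi_q$, a rational permutation representation. Hence $\psi_q$ is afforded by the rational module $\Q[\mathbb{P}^1(\fq_q)]/\Q$, so $m_\Q(\psi_q)=1$.

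The principal series characters $\psi_{q+1}^{(k)}$ are $\lambda^G$ for a linear character $\lambda$ of $B$ trivial on the unipotent radical. Over the field $\Q(\lambda)$, the character $\lambda$ is realized, so $\lambda^G$ is realized over $\Q(\lambda)$. By Galois theory and the Brauer--Speiser style argument, the Schur index then divides $[\Q(\lambda):\Q(\psi)]\le 2$. A cleaner route is to note the following. The Schur index of a real-valued character is at most $2$ here. By the Brauer--Speiser theorem, a real character has $m_\Q\le 2$, and $m_\Q=2$ forces the character to be a constituent of a representation of quaternion type. All values of $\psi_{q\pm1}$ are real, so we reduce to excluding the quaternionic case.

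\textbf{Excluding the quaternionic case.} For the $\Q(\chi)$-realizability of $\chi\in\{\psi_{q\pm1}^{(\cdot)}\}$, I would use the Frobenius--Schur indicator $\nu_2(\chi)=\frac1{|G|}\sum_g\chi(g^2)$. For real $\chi$ with $\nu_2=1$, $\chi$ is realized over $\mathbb R$, so the local Schur indices at infinite places are $1$. The finite places need separate control. Benard--Schacher gives that $\Q(\chi)$ contains $\zeta_{m}$ with $m=m_\Q(\chi)$, which yields nothing for $m=2$, so local indices at $p$-adic places must be handled directly.

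A known route, following Janusz, is to find a subgroup $H$ and a character $\theta$ of $H$ with $\langle \theta^G,\chi\rangle$ odd and $\theta$ realizable over $\Q(\chi)$. Then $m_\Q(\chi)$ divides that odd multiplicity, so $m_\Q(\chi)=1$.
\begin{itemize}
\item For $\psi_{q+1}^{(k)}$, take $H$ to be the split torus normalizer, or simply $B$ with $\theta=\lambda+\bar\lambda$ restricted suitably. Alternatively, take $H$ cyclic split torus $S$ with $\theta$ a character of $S$ of value field $\Q(\zeta)$, and compute the multiplicity via Frobenius reciprocity using the column $S(a)$.
\item For $\psi_{q-1}^{(j)}$, use the nonsplit torus $T$ together with the values $-\eta_0^{bj}-\eta_0^{-bj}$.
\end{itemize}
In each case, combine this with Lemma~\ref{SC} and the values to show $m_\Q$ divides an odd number.

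\textbf{The half-degree characters and the main obstacle.} The characters $\psi_\pm',\psi_\pm''$ for $q$ odd have field $\Q(\sqrt{\delta q})$. Here I would use the cyclic subgroup $U$ of unipotents of order $p$, or the Borel subgroup $B$. Since $\psi_+'+\psi_+''$ is a constituent of $\mu^G$ with $\mu$ the quadratic character of the torus in $B$, the restriction of $\psi'_\pm$ to $B$ contains a linear character realizable over $\Q(\sqrt{\delta q})$ with multiplicity $1$. This forces $m=1$.

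The main obstacle is the uniform handling of the $p$-adic and $2$-adic local indices, in particular the case $q\equiv 1\pmod4$ with $q$ a square, where $\Q(\chi)=\Q$. There I would verify the multiplicity-one restriction to $B$ carefully. Alternatively, I would cite Janusz's explicit computation, which is what the lemma references.
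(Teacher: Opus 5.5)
Your framework is sound and would give a self-contained proof; the paper has no argument of its own and simply cites Janusz's explicit determination of the simple components of $\Q[\SL(2,q)]$. The principle is this. If $\theta$ is a character of $H\le G$ afforded by a $\Q(\chi)$-representation, then $m_\Q(\chi)=m_{\Q(\chi)}(\chi)$ divides $\langle\theta^G,\chi\rangle$. Since $m_\Q(\chi)\le 2$ for real $\chi$, an odd value suffices.

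\textbf{Principal and discrete series.} The gap is that the multiplicities carrying the argument for $\psi^{(k)}_{q+1}$ and $\psi^{(j)}_{q-1}$ are never computed, and several of your listed choices fail. Put $e=\gcd(2,q-1)$, let $S,T$ be the split and nonsplit tori of $G$ (orders $\frac{q-1}{e},\frac{q+1}{e}$), and let $\rho_H$ be the regular character of $H$. The tables give $\psi^{(k)}_{q+1}|_S=\lambda+\bar\lambda+e\rho_S$ and $\psi^{(j)}_{q-1}|_T=-\mu-\bar\mu+e\rho_T$, with $\lambda\ne\bar\lambda$ and $\mu\ne\bar\mu$. Your options then fare as follows.
\begin{enumerate}
\item $\theta=\lambda+\bar\lambda$ on $B$ gives $\theta^G=2\psi^{(k)}_{q+1}$, and on $S$ it gives multiplicity $2+2e$. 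Both are even.
\item A linear $\theta$ on $S$ with field $\Q(\zeta_d)$ is not realizable over the real field $\Q(\chi)$, so the divisibility says nothing about $m_{\Q(\chi)}(\chi)$.
\item The dihedral normalizer works for $q$ odd: $\lambda^{N_G(S)}$ is realizable over $\Q(\chi)$ and $\langle\lambda^G,\chi\rangle=1+e=3$. Likewise $\langle\mu^G,\psi^{(j)}_{q-1}\rangle=e-1=1$.
\end{enumerate}
For $q$ even these two numbers are $2$ and $0$, so none of your options handles $q$ even. There you need, for example, $\theta=1_S$: both $\psi^{(k)}_{q+1}|_S=\lambda+\bar\lambda+\rho_S$ and $\psi^{(j)}_{q-1}|_S=\rho_S$ contain $1_S$ exactly once.

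\textbf{Half-degree characters.} This step fails for $q\equiv 3\pmod 4$. Then $S$ has odd order $\frac{q-1}{2}$, so $B$ has no quadratic $\mu$. The characters $\psi'_+,\psi''_+$ are faithful characters of $\SL_2(q)$, and the degree-$\frac{q-1}{2}$ characters of $G$ are $\psi'_-,\psi''_-$, which are cuspidal. From Table~\ref{table-psl(q)34}, $\langle\psi'_-|_U,1_U\rangle=\frac1q\bigl(\frac{q-1}{2}-\frac{q-1}{2}(\omega+\omega^*)\bigr)=0$. For $q>3$ every linear character of $B$ is trivial on $U=[B,B]$, so $\psi'_-|_B$ contains no linear character at all. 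A replacement is $\psi'_-|_T=-\nu+\rho_T$, with $\nu$ the quadratic character of $T$, which gives $\langle 1_T^G,\psi'_-\rangle=1$. For $q\equiv 1\pmod 4$ your $B$-argument is correct and uniform, since $\mu$ is $\{\pm1\}$-valued and $\langle\mu^G,\psi'_+\rangle=1$. So the square subcase you flag as the main obstacle is not one.

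\textbf{Quaternion type.} Your claim that $m_\Q(\chi)=2$ forces quaternion type is false. Index $2$ can come from an even number of finite places while all real places split. This is why explicit multiplicities like those above, not Frobenius--Schur indicators, have to carry the proof.
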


\begin{lemma} \label{lemma:uniqueness}
Let $q=p^{n}$ be a prime power with $n\ge1$. Suppose $A=\{\, 1<d\le q+1 \, : \, d\mid q+1 \, \}$ and $B=\{\, 1<e\le q-1 \, : \, e\mid q-1 \,\}$. Define 
\[
S_A=\left\{\frac{(q-1)}{2}\varphi(d): d\in A\right\}\quad \text{and} \quad
S_B=\left\{\frac{(q+1)}{2}\varphi(e): e\in B\right\},
\]
where $\varphi(m)$ denotes the number of integers in $[1, m]$ coprime to $m$. Then $S_A\cap S_B=\varnothing$.
\end{lemma}

\begin{proof}
Suppose, for contradiction, that there exist $d\in A$ and $e\in B$ such that
\begin{equation} \label{eq:1}
\frac{(q-1)\varphi(d)}{2}=\frac{(q+1)\varphi(e)}{2}. 
\end{equation}

Let $g=\gcd(q-1,q+1)$. Then for a prime power $q=p^{m}$, we have
\[
g=\begin{cases}
1& q \text{ even},\\
2& q \text{ odd}.
\end{cases}
\]
Write $q-1=ga$ and $q+1=gb$ with $\gcd(a,b)=1$. Equation~\eqref{eq:1} becomes
\[
a\varphi(d)=b\varphi(e). 
\]
Since $\gcd(a,b)=1$, there exists an integer $k\ge1$ such that $\varphi(d)=bk$ and $\varphi(e)=ak$. Because $\varphi(d)<d\le q+1=gb$ and $\varphi(e)<e\le q-1=ga$, we obtain $bk<gb$ and $ak<ga$, hence $k<g$. If $g=1$, this is impossible since $k\ge1$. Thus no solutions exist in this case. Therefore, $g=2$, so $q$ is odd and $k=1$. Hence, $\varphi(d)=\frac{q+1}{2}$ and $\varphi(e)=\frac{q-1}{2}$. Note that $\varphi(n)=n/2$ if and only if $n=2^r$ for some positive integer $r$. In this way, we obtain $d=q+1=2^k$ for some positive integer $k$ and $e=q-1=2^{k'}$ for some positive integer $k'$ which is impossible since they differ by $2$.
% \[
% d=q+1=2^k \,\,\,\, \text{for some positive integer}\,\,\,\, \frac{q+1}{2} \,\,\,\, \text{an odd prime}\quad \text{and} \quad
% e=q-1 \,\,\,\, \text{with} \,\,\,\, \frac{q-1}{2} \,\,\,\, \text{an odd prime}.
% \]
% Thus, both $q-1$ and $q+1$ are twice an odd prime, which is impossible since they differ by $2$.
This contradiction shows that no such $d$ and $e$ exist. Therefore, $S_A\cap S_B=\varnothing$.
\end{proof}

Lemma~\ref{lemma:RationRepsPSL2} determines the number of pairwise non-isomorphic simple $\Q G$-modules of each possible dimension of $\PSL_2(q)$, where $q$ is a power of $2$.
\begin{lemma} \label{lemma:RationRepsPSL2}
   Let $G=\PSL_2(q)$, where $q$ is a power of $2$. For a positive integer $m$, let $\varphi(m)$ denote the number of integers in $[1, m]$ that are coprime to $m$, and let $\tau(m)$ denote the number of positive divisors of $m$. Define
\[
A = \{ \, 2 < d \leq q+1 \, : \, d \mid q+1 \, \} \,\,\,\, \text{and} \,\,\,\, 
B = \{ \, 2 < d \leq q-1 \, : \, d \mid q-1 \, \}.
\]
Then the following statements hold.
\begin{enumerate}
    \item There is a unique trivial $\Q G$-module of dimension $1$.

    \item There is a unique (up to isomorphism) Steinberg $\Q G$-module of dimension $q$.

    \item For each $d \in A$, there exists a unique (up to isomorphism) simple $\Q G$-module of dimension $\frac{q-1}{2}\varphi(d)$, and these modules are pairwise non-isomorphic for distinct $d$.

    \item For each $d \in B$, there exists a unique (up to isomorphism) simple $\Q G$-module of dimension $\frac{q+1}{2}\varphi(d)$, and these modules are pairwise non-isomorphic for distinct $d$.

    \item $|\irr_{\Q}(G)|=\tau(q-1) + \tau(q+1)$.
\end{enumerate}
\end{lemma}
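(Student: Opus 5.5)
The plan is to read off the simple $\Q G$-modules from the Galois orbits in \Cref{table-psl(2)}. By Lemma~\ref{lemma:schurindexPSL}, every $\chi\in\irr(G)$ has $m_{\Q}(\chi)=1$. So by Lemma~\ref{Reiner} and the discussion in \S\ref{sec:Schur}, simple $\Q G$-modules correspond bijectively to Galois conjugacy classes $E(\chi)$. The module attached to $E(\chi)$ has character $\Omega(\chi)$ and dimension $[\Q(\chi):\Q]\,\chi(1)=|E(\chi)|\,\chi(1)$ by Lemma~\ref{SC}. It therefore suffices to determine the Galois orbits on $\irr(G)$ and their sizes.

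\textbf{Rational characters.} $\psi_1$ and $\psi_q$ are rational-valued, so each forms a singleton orbit. This gives (1) and (2), with dimensions $1$ and $q$.

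\textbf{The family $\psi_{q-1}^{(j)}$, $1\le j\le q/2$.}
\begin{itemize}
\item Since $q$ is even, $q+1$ is odd. Hence the indices $j$ with $1\le j\le q/2$ represent the classes $\{\pm j\}$ of nonzero residues mod $q+1$ exactly once each.
\item Put $d=(q+1)/\gcd(j,q+1)$. Then $\eta_0^j$ is a primitive $d$-th root of unity, and $d>1$ is odd, so $d\ge 3$ and $d\in A$.
\item The character values are $\pm(\eta_0^{bj}+\eta_0^{-bj})$ and integers. Taking $b=1$ shows $\Q(\psi_{q-1}^{(j)})=\Q(\zeta_d+\zeta_d^{-1})$, which has degree $\varphi(d)/2$.
\item An automorphism $\zeta_{q+1}\mapsto\zeta_{q+1}^t$ with $\gcd(t,q+1)=1$ sends $\psi_{q-1}^{(j)}$ to $\psi_{q-1}^{(j')}$, where $j'\equiv\pm tj \pmod{q+1}$ is normalized into $[1,q/2]$.
\item Consequently the orbit of $\psi_{q-1}^{(j)}$ is exactly the set of $j'$ with the same $d$. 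This set has $\varphi(d)/2$ elements, consistent with Lemma~\ref{SC}.
\end{itemize}
So for each $d\in A$ there is exactly one simple module, of dimension $\frac{q-1}{2}\varphi(d)$. This proves (3).

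\textbf{The family $\psi_{q+1}^{(k)}$, $1\le k\le q/2-1$.} The same argument applies, with $\epsilon$ a primitive $(q-1)$-th root of unity, $q-1$ odd, and $d=(q-1)/\gcd(k,q-1)\in B$. It gives one simple module per $d\in B$, of dimension $\frac{q+1}{2}\varphi(d)$. This proves (4).

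\textbf{Pairwise non-isomorphism.} Modules attached to distinct orbits are non-isomorphic, because their characters $\Omega(\chi)$ differ. Modules from the $A$-family and the $B$-family even have different dimensions, by Lemma~\ref{lemma:uniqueness}. None of these dimensions equals $1$ or $q$: a dimension $\frac{q\pm1}{2}\varphi(d)$ with $\varphi(d)\ge2$ is at least $q-1$. It equals $q-1$ only when $\varphi(d)=2$ in the $A$-family, and $q-1\neq q$.

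\textbf{Counting.} The simple $\Q G$-modules are the two rational ones plus one per element of $A$ and one per element of $B$. Since $q\pm1$ are odd, $|A|=\tau(q+1)-1$ and $|B|=\tau(q-1)-1$. For $q=2$ we have $B=\varnothing$, and $\tau(1)=1$ handles this consistently. Hence $|\irr_{\Q}(G)|=2+(\tau(q+1)-1)+(\tau(q-1)-1)=\tau(q-1)+\tau(q+1)$, which is (5).

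\textbf{Main obstacle.} The delicate step is identifying the Galois orbits exactly. One must check two things. First, the action of $\Gal(\Q(\zeta_{q+1})/\Q)$ on the parameter $j$ is multiplication by units modulo the sign identification. Second, it is transitive on the $j$ with a given $\gcd(j,q+1)$. This follows because the units mod $q+1$ surject onto the units mod $d$, and $j/\gcd(j,q+1)$ ranges over the units mod $d$.
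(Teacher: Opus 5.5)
Your proposal is correct and follows essentially the same route as the paper: you read off the character fields from Table~\ref{table-psl(2)}, group the $\psi^{(j)}_{q-1}$ and $\psi^{(k)}_{q+1}$ into Galois orbits indexed by divisors $d$, and apply Schur index $1$ with Schur theory, while your transitivity argument spells out what the paper only asserts. One harmless slip: your remark that no family dimension equals $1$ fails for $q=2$, where $d=3\in A$ gives the one-dimensional sign module of $\PSL_2(2)\cong S_3$, but you never use it, since non-isomorphism already follows from the distinct characters $\Omega(\chi)$.
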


\begin{proof}
Note that there exists a unique trivial $\Q G$-module of dimension $1$, and there exists a unique (up to isomorphism) Steinberg $\Q G$-module of dimension $q$. Next, from Table~\ref{table-psl(2)}, we observe that
\begin{align*}
|\Gal(\Q(\psi^{(j)}_{q-1})/\Q)|
&= [\Q(\psi^{(j)}_{q-1}) : \Q] \\
&= [\Q(\eta_0^{j}+\eta_0^{-j}) : \Q] \\
&= \frac{1}{2}\varphi\!\left(\frac{q+1}{\gcd(q+1,j)}\right)
\end{align*}
as $\eta_0$ is a primitive $(q+1)$-th root of unity in $\mathbb{C}$. Furthermore, $j \in \left\{1,2,\ldots,\frac{q}{2}\right\}$. Therefore, all irreducible complex characters of $\PSL_2(q)$ of type $\psi^{(j)}_{q-1}$, i.e.,
\[
\{\chi \in \irr(\PSL_2(q)) : \chi(1)=q-1\},
\]
form exactly one distinct Galois conjugacy class of size $\frac{1}{2}\varphi(d)$ for each
\[
d \in \{\, 2<d\le q+1 \, : \,  d \mid q+1 \,\}.
\]
Hence, the assertion that there exists a unique (up to isomorphism) simple $\Q G$-module of dimension $\frac{q-1}{2}\varphi(d)$ for each $d \in A$ follows directly from Lemmas~\ref{lemma:schurindexPSL} and~\ref{lemma:uniqueness} together with Schur theory.

Similarly, from Table~\ref{table-psl(2)}, we have
\begin{align*}
|\Gal(\Q(\psi^{(k)}_{q+1})/\Q)|
&= [\Q(\psi^{(k)}_{q+1}) : \Q] \\
&= [\Q(\epsilon^{k}+\epsilon^{-k}) : \Q] \\
&= \frac{1}{2}\varphi\!\left(\frac{q-1}{\gcd(q-1,k)}\right)
\end{align*}
as $\epsilon$ is a primitive $(q-1)$-th root of unity in $\mathbb{C}$. Furthermore, $k \in \left\{1,2,\ldots,\frac{q}{2}-1\right\}$. Therefore, all irreducible complex characters of $\PSL_2(q)$ of type $\psi^{(k)}_{q+1}$, i.e.,
\[
\{\chi \in \irr(\PSL_2(q)) : \chi(1)=q+1\},
\]
form exactly one distinct Galois conjugacy class of size $\frac{1}{2}\varphi(d)$ for each
\[
d \in \{\, 2<d\le q-1 \, : \,  d \mid q-1 \, \}.
\]
Hence, the assertion that there exists a unique (up to isomorphism) simple $\Q G$-module of dimension $\frac{q+1}{2}\varphi(d)$ for each $d \in B$ follows directly from Lemmas~\ref{lemma:schurindexPSL} and~\ref{lemma:uniqueness} together with Schur theory.

Finally, observe that there exists a unique (up to isomorphism) simple $\Q G$-module corresponding to each divisor greater than $1$ of $q-1$ and $q+1$, and there exists a unique (up to isomorphism) simple $\Q G$-module of dimensions $1$ and $q$, respectively, as discussed above. Moreover, these rational representations exhaust all $\chi \in \irr(G)$. Hence, part~(5) follows. This completes the proof of Lemma~\ref{lemma:RationRepsPSL2}.
\end{proof}

We now proceed to the proof of Theorem~\ref{thm:wedderburn PSL(even)}.
\begin{proof}[Proof of Theorem~\ref{thm:wedderburn PSL(even)}]
    Let $G=\PSL_2(q)$, where $q$ is power of $2$, and let $\chi \in \irr(G)$. Suppose $\rho$ is an irreducible $\mathbb{Q}$-representation of $G$ affording the character $\Omega(\chi)$. Denote by $A_\mathbb{Q}(\chi)$ the simple component in the Wedderburn decomposition of $\mathbb{Q}G$ corresponding to $\rho$, so that $A_\mathbb{Q}(\chi) \cong M_n(D)$ for some $n \in \mathbb{N}$ and a division algebra $D$. By Lemma~\ref{lemma:schurindexPSL}, we have $m_\mathbb{Q}(\chi) = 1$. Moreover, Lemma~\ref{Reiner} shows that $[D:Z(D)] = m_\mathbb{Q}(\chi)^2$ and $Z(D) = \mathbb{Q}(\chi)$. Hence, $D = Z(D) = \mathbb{Q}(\chi)$. Next, consider
			\[
			\rho \cong \bigoplus_{i=1}^l \rho_i,
			\]
			where $l = [\mathbb{Q}(\chi): \mathbb{Q}]$ and each $\rho_i$ is an irreducible complex representation of $G$ affording the character $\chi^{\sigma_i}$ for some $\sigma_i \in \mathrm{Gal}(\mathbb{Q}(\chi)/\mathbb{Q})$. Since $m_\mathbb{Q}(\chi) = 1$, it follows from \cite[Theorem 3.3.1]{JR} that $n = \chi(1)$.

			Furthermore, we have
			\[
			\cd(G) = \{1, q-1, q, q+1\}.
			\]
    Note that there is only one trivial representation, which is of degree one complex representation. Therefore, the simple component in the Wedderburn decomposition of $\mathbb{Q}G$ corresponding to the irreducible $\mathbb{Q}$-representations of $G$ affording the character $\Omega(\chi)$ with $\chi(1)=1$ is isomorphic to $\Q$.
    
    Next, let $A=\{\, 2 < d \leq q+1 \, : \, d \mid q+1 \, \}$. By Lemma~\ref{lemma:RationRepsPSL2}, for each $d \in A$, there is exactly one irreducible $\Q$-representation of $G$ that affords the character $\Omega(\chi)$ with $\chi(1)=q-1$ and $\Q(\chi)=\Q(\zeta_{d}+\zeta_d^{-1})$. Moreover, these are all irreducible $\Q$-representation of $G$ that affords the character $\Omega(\chi)$ with $\chi(1)=q-1$. Therefore, the direct sum of the simple components in the Wedderburn decomposition of $\mathbb{Q}G$ corresponding to all inequivalent irreducible $\mathbb{Q}$-representations of $G$ affording the character $\Omega(\chi)$ with $\chi(1)=q-1$ is isomorphic to 
    $$\bigoplus_{d \in A} \M_{q-1}(\Q(\zeta_{d}+\zeta_d^{-1})).$$
    Analogous to the similar argument as above, by Lemma~\ref{lemma:RationRepsPSL2}, the direct sum of the simple components in the Wedderburn decomposition of $\mathbb{Q}G$ corresponding to all inequivalent irreducible $\mathbb{Q}$-representations of $G$ affording the character $\Omega(\chi)$ with $\chi(1)=q+1$ is isomorphic to 
    $$\bigoplus_{d \in B} \M_{q+1}(\Q(\zeta_{d}+\zeta_d^{-1})),$$
    where $B=\{ \, 2 < d \leq q-1 \, : \, d \mid q-1 \, \}$.

    Finally, note that there is only one irreducible complex representation of $G$ of degree $q$ that affords the character $\psi_q$. Moreover $\Q(\psi_q)=\Q$. Therefore, the simple component in the Wedderburn decomposition of $\mathbb{Q}G$ corresponding to the irreducible $\mathbb{Q}$-representations of $G$ affording the character $\Omega(\psi_q)$ is isomorphic to $\M_q(\Q)$. 

    Hence, by collecting the simple components corresponding to all inequivalent irreducible $\mathbb{Q}$-representations of $G$ affording the character $\Omega(\chi)$ with $\chi \in \irr(G)$, the result follows. This completes the proof of Theorem~\ref{thm:wedderburn PSL(even)}.  
\end{proof}

Next, we consider the case where $q$ is a power of an odd prime $p$. Lemma~\ref{lemma:RationRepsPSL2 odd} determines the number of pairwise non-isomorphic simple $\mathbb{Q}G$-modules of each possible dimension for $G = \mathrm{PSL}_2(q)$, where $q$ is a power of an odd prime $p$.

\begin{lemma} \label{lemma:RationRepsPSL2 odd}
    Let $G=\PSL_2(q)$, where $q$ is a power of an odd prime $p$. For a positive integer $m$, let $\varphi(m)$ denote the number of integers in $[1,m]$ that are coprime to $m$, and let $\tau(m)$ denote the number of positive divisors of $m$. Define
\[
A=\left\{\, 2 < d \leq \tfrac{q+1}{2} \, : \, d \mid \tfrac{q+1}{2} \,\right\}
\,\,\,\, \text{and} \,\,\,\,
B=\left\{\, 2 < d \leq \tfrac{q-1}{2} \, : \, d \mid \tfrac{q-1}{2} \,\right\}.
\]
Then the following statements hold.
\begin{enumerate}
    \item There is a unique trivial $\Q G$-module of dimension $1$.

    \item There is a unique (up to isomorphism) Steinberg $\Q G$-module of dimension $q$.

    \item For each $d \in A$, there exists a unique (up to isomorphism) simple $\Q G$-module of dimension $\frac{q-1}{2}\varphi(d)$, and these modules are pairwise non-isomorphic for distinct $d$.

    \item For each $d \in B$, there exists a unique (up to isomorphism) simple $\Q G$-module of dimension $\frac{q+1}{2}\varphi(d)$, and these modules are pairwise non-isomorphic for distinct $d$.

    \item The remaining simple $\Q G$-modules are as follows, which are non-isomorphic to any modules arising from parts (3) and (4):
    \begin{itemize}
        \item If $q \equiv 3 \pmod{4}$, there exists a unique (up to isomorphism) simple $\Q G$-module of dimension $q-1$.
        \item If $q \equiv 1 \pmod{4}$ and $q$ is not a square, there exists a unique (up to isomorphism) simple $\Q G$-module of dimension $q+1$.
        \item If $q \equiv 1 \pmod{4}$ and $q$ is a square, there exist two (up to isomorphism) simple $\Q G$-modules of dimension $\frac{q+1}{2}$.
    \end{itemize}

    \item \[
|\irr_{\Q}(G)| =
\begin{cases}
\tau\!\left(\dfrac{q-1}{2}\right)
+\tau\!\left(\dfrac{q+1}{2}\right)
+1
& \text{if } q \equiv 1 \pmod{4} \text{ and } q \text{ is a square}, \\[1ex]
\tau\!\left(\dfrac{q-1}{2}\right)
+\tau\!\left(\dfrac{q+1}{2}\right)
& \text{otherwise}.
\end{cases}
\]
\end{enumerate}
\end{lemma}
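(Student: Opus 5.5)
The plan is to follow the proof of Lemma~\ref{lemma:RationRepsPSL2}. We read off character fields from Tables~\ref{table-psl(q)34} and~\ref{table-psl(q)14}. We then group $\irr(G)$ into Galois conjugacy classes via Lemma~\ref{SC}, and pass to simple $\Q G$-modules using Lemma~\ref{lemma:schurindexPSL} (all Schur indices equal $1$) together with Schur theory. Each class $E(\chi)$ then gives exactly one simple $\Q G$-module of dimension $|E(\chi)|\chi(1)$. Parts (1) and (2) are immediate, since $\psi_1$ and $\psi_q$ are rational-valued.

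\textbf{Parts (3) and (4).} For the characters $\psi^{(j)}_{q-1}$, $j$ runs over the even numbers in the stated range. Write $j=2j'$. Since $\eta_0^2$ is a primitive $\frac{q+1}{2}$-th root of unity, $\Q(\psi^{(j)}_{q-1})=\Q(\zeta_d+\zeta_d^{-1})$ with $d=\frac{(q+1)/2}{\gcd((q+1)/2,j')}$.

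For $T\!\left(\frac{q+1}{4}\right)$ (case $q\equiv 3\pmod 4$), note that $\eta_0^{(q+1)j/4}=\pm1$ because $j$ is even, so this value is rational and the field is unchanged. As $j'$ runs over $1,\dots,\lfloor\frac{q+1}{4}\rfloor$ (roughly), each divisor $d>2$ of $\frac{q+1}{2}$ occurs exactly once up to Galois conjugacy. The class has size $\frac12\varphi(d)$, which gives a module of dimension $\frac{q-1}{2}\varphi(d)$.

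The values $d\le 2$ need separate handling. If $d=2$, which happens when $j'=\frac{q+1}{4}$ and $q\equiv 3\pmod4$, the character is rational and gives a module of dimension $q-1$. If $d=1$, the index $j'$ would equal $\frac{q+1}{2}$, which lies outside the range.

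The argument for $\psi^{(k)}_{q+1}$ is the same, with $\epsilon^2$ a primitive $\frac{q-1}{2}$-th root of unity. This gives one module of dimension $\frac{q+1}{2}\varphi(d)$ for each $d\in B$. When $q\equiv1\pmod4$ the leftover index with $d=2$ gives a rational character of degree $q+1$. The check that $2\epsilon^{(q-1)k/4}$ is rational for even $k$ works exactly as before.

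\textbf{Part (5) and the half-degree characters.} The remaining characters are $\psi'_\pm$ and $\psi''_\pm$. Their values generate $\Q(\sqrt{\delta q})$, and the two characters in each pair are swapped by $\sqrt{\delta q}\mapsto-\sqrt{\delta q}$.
\begin{itemize}
\item If $q\equiv3\pmod4$, then $\sqrt{-q}\notin\Q$. The pair forms one class of size $2$, giving a module of dimension $q-1$.
\item If $q\equiv1\pmod4$ and $q$ is not a square, then $\sqrt q$ is irrational. We get one module of dimension $q+1$.
\item If $q$ is a square, both characters are rational. We get two modules of dimension $\frac{q+1}{2}$.
\end{itemize}

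\textbf{Non-isomorphism.} Modules coming from different Galois classes are non-isomorphic, because their characters differ. This holds even when their dimensions coincide, for example the $d=2$ degree-$(q-1)$ module and the $\psi'_-$ module when $q\equiv 3\pmod 4$. For equal-dimension modules in parts (3) and (4), Lemma~\ref{lemma:uniqueness} and the differing character fields give the distinctions.

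\textbf{Part (6).} This is a count. The number of divisors $d>2$ of $\frac{q+1}{2}$ and of $\frac{q-1}{2}$ is $\tau(\cdot)$ minus the contribution of $d=1$, and of $d=2$ when $2$ divides the number. Exactly one of $\frac{q\pm1}{2}$ is even, and that is the side where the exceptional $d=2$ module or the half-degree pair appears. Adding back $\psi_1$, $\psi_q$, the $d=2$ character, and the one or two modules from $\psi'_\pm,\psi''_\pm$ should reproduce $\tau(\frac{q-1}{2})+\tau(\frac{q+1}{2})$, plus one in the square case.

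\textbf{Main obstacle.} I expect the hardest step to be the careful index bookkeeping. This means confirming that the parameter ranges in the tables give each divisor exactly once, and correctly matching the boundary columns $S\!\left(\frac{q-1}{4}\right)$ and $T\!\left(\frac{q+1}{4}\right)$ with the $d=2$ cases in the final tally.
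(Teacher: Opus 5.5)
Your overall route (Galois classes via Lemma~\ref{SC}, all Schur indices $1$ by Lemma~\ref{lemma:schurindexPSL}, then Schur theory) is the paper's. However, the index bookkeeping you flagged as the main obstacle goes wrong, and it produces a character that does not exist.

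\textbf{The spurious $d=2$ character.} In Table~\ref{table-psl(q)34} the parameter of $\psi^{(j)}_{q-1}$ stops at $j=\tfrac{q-3}{2}$. So $j'=j/2\le\tfrac{q-3}{4}<\tfrac{q+1}{4}$, not $\lfloor\tfrac{q+1}{4}\rfloor$, and the index $j'=\tfrac{q+1}{4}$ giving $d=2$ is excluded exactly as $d=1$ is. Likewise, in Table~\ref{table-psl(q)14} the parameter of $\psi^{(k)}_{q+1}$ stops at $k=\tfrac{q-5}{2}$, so the $d=2$ index $k=\tfrac{q-1}{2}$ is absent.

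This is not an artifact of the tables. The would-be characters $\psi^{(\frac{q+1}{2})}_{q-1}$ (for $q\equiv3\pmod4$) and $\psi^{(\frac{q-1}{2})}_{q+1}$ (for $q\equiv1\pmod4$) are reducible. They equal $\psi'_-+\psi''_-$ and $\psi'_++\psi''_+$ respectively, as you can check column by column. Concretely, $\PSL_2(5)\cong A_5$ has no irreducible character of degree $6$, and $\PSL_2(7)$ has a single one of degree $6$ (with $j=2$, $d=4$).

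Your verification that $-2\eta_0^{(q+1)j/4}$ and $2\epsilon^{(q-1)k/4}$ are rational is correct. But it concerns the boundary conjugacy classes $T(\tfrac{q+1}{4})$ and $S(\tfrac{q-1}{4})$, not a boundary character.

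\textbf{Consequences and repair.} Your classification therefore has one extra simple $\Q G$-module in every case. It contradicts the uniqueness in part~(5): you even cite the spurious $d=2$ module and the $\psi'_-$ module as two distinct modules of dimension $q-1$. Your tally in part~(6), left at ``should reproduce'', actually comes out one too large. For $q=7$ you would get $6$ rational classes, which would require $7$ irreducible complex characters, whereas $\PSL_2(7)$ has only $6$ conjugacy classes.

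To repair this, use the exact ranges from the tables; they force $d>2$ for every $\psi^{(j)}_{q-1}$ and $\psi^{(k)}_{q+1}$. Then note how the divisors excluded from $A$ and $B$ are accounted for. The divisor $2$ of whichever of $\tfrac{q\pm1}{2}$ is even is matched by the half-degree pair. The two divisors $1$ are matched by $\psi_1$ and $\psi_q$. The counts become:
\begin{itemize}
\item for $q\equiv3\pmod4$: $\bigl(\tau(\tfrac{q+1}{2})-2\bigr)+\bigl(\tau(\tfrac{q-1}{2})-1\bigr)+2+1$;
\item for $q\equiv1\pmod4$: $\bigl(\tau(\tfrac{q-1}{2})-2\bigr)+\bigl(\tau(\tfrac{q+1}{2})-1\bigr)+2+c$, with $c=2$ if $q$ is a square and $c=1$ otherwise.
\end{itemize}
These are exactly the values in part~(6).
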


\begin{proof}
From Tables~\ref{table-psl(q)34} and~\ref{table-psl(q)14}, we obtain
\begin{align*}
|\Gal(\Q(\psi^{(j)}_{q-1})/\Q)|
&= [\Q(\psi^{(j)}_{q-1}) : \Q] \\
&= [\Q(\eta_0^{j}+\eta_0^{-j}) : \Q] \\
&= \frac{1}{2}\varphi\!\left(\frac{q+1}{\gcd(q+1,j)}\right)
\end{align*}
since $\eta_0$ is a primitive $(q+1)$-th root of unity in $\mathbb{C}$. Moreover,
$j \in \{2,4,\dots,\tfrac{q-3}{2}\}$ when $q \equiv 3 \pmod{4}$, and
$j \in \{2,4,\dots,\tfrac{q-1}{2}\}$ when $q \equiv 1 \pmod{4}$.

For each such $j$, we have $\Q(\eta_0^{j}) = \Q(\zeta_d)$ for some
$d \in A=\left\{\, 2 < d \leq \tfrac{q+1}{2} \, : \, d \mid \tfrac{q+1}{2} \,\right\}$.
Note that $\tfrac{q+1}{2}$ is even when $q \equiv 3 \pmod{4}$ and odd when
$q \equiv 1 \pmod{4}$. It follows that all irreducible complex characters
of $\PSL_2(q)$ of type $\psi^{(j)}_{q-1}$, i.e.,
\[
\{\chi \in \irr(\PSL_2(q)) : \chi(1) = q-1\},
\]
split into Galois conjugacy classes indexed by $d \in A$, each of size
$\tfrac{1}{2}\varphi(d)$. Hence, by Lemmas~\ref{lemma:schurindexPSL} and~\ref{lemma:uniqueness},
together with Schur theory, there exists a unique (up to isomorphism)
simple $\Q G$-module of dimension $\tfrac{q-1}{2}\varphi(d)$ for each
$d \in A$. This proves part~(3).

The proofs of the remaining parts of the lemma follow by arguments
analogous to those used above and in the proof of
Lemma~\ref{lemma:RationRepsPSL2}. This completes the proof of
Lemma~\ref{lemma:RationRepsPSL2 odd}.
\end{proof}

Finally, we conclude this section by presenting the proof of Theorem~\ref{thm:wedderburn PSL(odd)}.
\begin{proof}[Proof of Theorem~\ref{thm:wedderburn PSL(odd)}]
    Let $G=\PSL_2(q)$, where $q=p^m$ for some odd prime $p$, and let $\chi \in \irr(G)$. Suppose $\rho$ is an irreducible $\mathbb{Q}$-representation of $G$ affording the character $\Omega(\chi)$. Denote by $A_\mathbb{Q}(\chi)$ the simple component in the Wedderburn decomposition of $\mathbb{Q}G$ corresponding to $\rho$, so that $A_\mathbb{Q}(\chi) \cong \M_n(D)$ for some $n \in \mathbb{N}$ and a division algebra $D$. By Lemma~\ref{lemma:schurindexPSL}, we have $m_\mathbb{Q}(\chi) = 1$. Moreover, Lemma~\ref{Reiner} shows that $[D:Z(D)] = m_\mathbb{Q}(\chi)^2$ and $Z(D) = \mathbb{Q}(\chi)$. Hence, $D = Z(D) = \mathbb{Q}(\chi)$. Next, consider
			\[
			\rho \cong \bigoplus_{i=1}^l \rho_i,
			\]
			where $l = [\mathbb{Q}(\chi): \mathbb{Q}]$ and each $\rho_i$ is an irreducible complex representation of $G$ affording the character $\chi^{\sigma_i}$ for some $\sigma_i \in \mathrm{Gal}(\mathbb{Q}(\chi)/\mathbb{Q})$. Since $m_\mathbb{Q}(\chi) = 1$, it follows from \cite[Theorem 3.3.1]{JR} that $n = \chi(1)$. The rest of the proof proceeds in the following two cases.

            \begin{enumerate}
                \item {\bf Case ($q\equiv3\pmod{4}$).} In this case, we have
\[
			\cd(G) = \left \{1, \frac{q-1}{2}, q-1, q, q+1 \right \}.
			\]
    Note that there is only trivial representation, which is of degree one complex representation. Therefore, the simple component in the Wedderburn decomposition of $\mathbb{Q}G$ corresponding to the irreducible $\mathbb{Q}$-representations of $G$ affording the character $\Omega(\chi)$ with $\chi(1)=1$ is isomorphic to $\Q$. Furthermore, we have
     $$\Q(\psi_-')=\Q(\psi_-'')=\Q(\omega)=\Q(\sqrt{-q})$$
    (see Table~\ref{table-psl(q)34}). Hence, the simple component in the Wedderburn decomposition of $\mathbb{Q}G$ corresponding to the irreducible $\mathbb{Q}$-representations of $G$ affording the character $\Omega(\chi)$ with $\chi(1)=\frac{q-1}{2}$ is isomorphic to $\M_{\frac{q-1}{2}}(\Q(\sqrt{-q}))$.
    
    Next, let $A=\left\{\, 2 < d \leq \tfrac{q+1}{2} \, : \, d \mid \tfrac{q+1}{2} \,\right\}$. By Lemma~\ref{lemma:RationRepsPSL2 odd}, for each $d \in A$, there is exactly one irreducible $\Q$-representation of $G$ that affords the character $\Omega(\chi)$ with $\chi(1)=q-1$ and $\Q(\chi)=\Q(\zeta_{d}+\zeta_d^{-1})$. Moreover, these are all irreducible $\Q$-representation of $G$ that affords the character $\Omega(\chi)$ with $\chi(1)=q-1$. Therefore, the direct sum of the simple components in the Wedderburn decomposition of $\mathbb{Q}G$ corresponding to all inequivalent irreducible $\mathbb{Q}$-representations of $G$ affording the character $\Omega(\chi)$ with $\chi(1)=q-1$ is isomorphic to 
    $$\bigoplus_{d \in A} \M_{q-1}(\Q(\zeta_{d}+\zeta_d^{-1})).$$
    Analogous to the similar argument as above, by Lemma~\ref{lemma:RationRepsPSL2 odd}, the direct sum of the simple components in the Wedderburn decomposition of $\mathbb{Q}G$ corresponding to all inequivalent irreducible $\mathbb{Q}$-representations of $G$ affording the character $\Omega(\chi)$ with $\chi(1)=q+1$ is isomorphic to 
    $$\bigoplus_{d \in B} \M_{q+1}(\Q(\zeta_{d}+\zeta_d^{-1})),$$
    where $B=\left\{\, 2 < d \leq \tfrac{q-1}{2} \, : \, d \mid \tfrac{q-1}{2} \,\right\}$.

    Finally, note that that there is only one irreducible complex representation of $G$ of degree $q$ that affords the character $\psi_q$. Moreover $\Q(\psi_q)=\Q$. Therefore, the simple component in the Wedderburn decomposition of $\mathbb{Q}G$ corresponding to the irreducible $\mathbb{Q}$-representations of $G$ affording the character $\Omega(\psi_q)$ is isomorphic to $\M_q(\Q)$. 

    Hence, by collecting the simple components corresponding to all inequivalent irreducible $\mathbb{Q}$-representations of $G$ affording the character $\Omega(\chi)$ with $\chi \in \irr(G)$, the result follows.

    \item {\bf Case ($q\equiv1\pmod{4}$).} In this case, we have
\[
\cd(G)=\left\{1,\frac{q+1}{2},q-1,q,q+1\right\}.
\] 

For the subcase ($q$ is not a square), we have
$$
\Q(\psi_+')=\Q(\psi_+'')=\Q(\omega)=\Q(\sqrt{q})
$$
(see Table~\ref{table-psl(q)14}). It follows by Lemma~\ref{lemma:RationRepsPSL2 odd} that the simple component in the Wedderburn decomposition of $\Q G$ corresponding to the irreducible $\Q$-representations of $G$ affording the character $\Omega(\chi)$ with $\chi(1)=\frac{q+1}{2}$ is isomorphic to $\M_{\frac{q+1}{2}}(\Q(\sqrt{q}))$.

For the subcase ($q$ is a square), we have
$$
\Q(\psi_+')=\Q(\psi_+'')=\Q(\omega)=\Q
$$
(see Table~\ref{table-psl(q)14}). Consequently, by Lemma~\ref{lemma:RationRepsPSL2 odd}, the simple components in the Wedderburn decomposition of $\Q G$ corresponding to the irreducible $\Q$-representations of $G$ affording the character $\Omega(\chi)$ with $\chi(1)=\frac{q+1}{2}$ are isomorphic to $2\M_{\frac{q+1}{2}}(\Q)$.

The remaining part of the proof proceeds by arguments similar to those in the previous case, and hence the results follow.
            \end{enumerate}
	This completes the proof of Theorem~\ref{thm:wedderburn PSL(odd)}.
\end{proof}

\section{The results for the group $\SL_2(q)$} \label{sec:SLresult}
In this section, we turn to the special linear group $G = \SL_2(q)$. Note that if $q$ is a power of $2$, then $\SL_2(q) = \PSL_2(q)$. Thus, we focus on the group $\SL_2(q)$, where $q$ is a power of an odd prime $p$. Here, we present the proof of Theorem~\ref{thm:wedderburn SL(odd)}. We again first determine the number of pairwise non-isomorphic simple $\Q G$-modules of each possible dimension of $\SL_2(q)$, as these are central to the proof of the theorem. Before proving Theorem~\ref{thm:wedderburn SL(odd)}, we state the following lemma, which determines the Schur index $m_{\Q}(\chi)$ for each $\chi \in \irr(G)$ over $\Q$.
 
\begin{lemma}\cite[Theorem~3.1]{Shahabi} \label{lemma:schurindexSL}
Let $G=\SL_2(q)$. If $q$ is a power of 2, then the Schur index of any irreducible complex character of $G$ over $\Q$ is 1. If $q$ is a power of an odd prime $p$, then the Schur indices of the irreducible complex characters of $G$ over $\Q$ are given in \Cref{table:SchurIndexSL2}.
\begin{table}[h]
\centering
\renewcommand{\arraystretch}{1.3}
\begin{tabular}{c|cc}
 & $q \equiv 1 \pmod{4}$ & $q \equiv 3 \pmod{4}$ \\
\hline
$\psi_1$ & $1$ & $1$ \\
% \hline
$\psi_q$ & $1$ & $1$ \\
% \hline
$\psi_{q+1}^{(i)}$, $i\equiv 1\pmod2$ & $2$ & $2$ \\
$\psi_{q+1}^{(i)}$, $i\equiv 0\pmod 2$ & $1$ & $1$ \\
% \hline
$\psi_{q-1}^{(j)}$, $j\equiv 1\pmod2$ & $2$ & $2$ \\
$\psi_{q-1}^{(j)}$, $j\equiv 0\pmod 2$ & $1$ & $1$ \\
% \hline
$\psi_+'$ & $1$ & $1$ \\
$\psi_+''$ & $1$ & $1$ \\
% \hline
$\psi_-'$ & $2$ & $1$ \\
$\psi_-''$ & $2$ & $1$ \\
\hline
\end{tabular}
\caption{Table of Schur Indices} \label{table:SchurIndexSL2}
\end{table}
\end{lemma}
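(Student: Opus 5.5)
Since \Cref{lemma:schurindexSL} is quoted from \cite{Shahabi}, here is how I would re-derive \Cref{table:SchurIndexSL2} from earlier results; the characteristic-$2$ statement is \Cref{lemma:schurindexPSL}, since then $\SL_2(q)=\PSL_2(q)$. For odd $q$ the plan is to split $\irr(\SL_2(q))$ into faithful and non-faithful characters using the column $-I$ of \Cref{table-sl(q)}. A character $\chi$ with $\chi(-I)=\chi(1)$ factors through $\PSL_2(q)$. For such a character, \Cref{lemma:schurindexPSL} gives $m_{\Q}(\chi)=1$ for the inflated character, because a $\Q$-representation of the quotient inflates to one of $G$. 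Reading off the table, this covers $\psi_1$, $\psi_q$, $\psi^{(i)}_{q+1}$ and $\psi^{(j)}_{q-1}$ with $i,j$ even. It also covers $\psi_\pm',\psi_\pm''$ exactly when $\pm\delta=1$, that is, $\psi_+$ for $q\equiv1\pmod 4$ and $\psi_-$ for $q\equiv3\pmod4$. These are precisely the entries equal to $1$ in the table that involve the PSL-characters.

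Next I treat the faithful real characters: $\psi^{(i)}_{q+1}$ with $i$ odd, $\psi^{(j)}_{q-1}$ with $j$ odd, and $\psi_-',\psi_-''$ for $q\equiv1\pmod4$. For the last two, $\delta=1$ makes $\omega,\omega^*$ real. I would compute the Frobenius--Schur indicator $\nu(\chi)=|G|^{-1}\sum_{g}\chi(g^2)$. The cleanest route uses the standard fact that $-I$ is the unique involution of $\SL_2(q)$, so the number of square roots of $-I$ can be controlled. This gives $\nu(\chi)=\chi(-I)/\chi(1)$ for real $\chi$, and hence $\nu(\chi)=-1$ for faithful real $\chi$. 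Then $m_{\mathbb R}(\chi)=2$, and so $m_{\Q}(\chi)\ge 2$. On the other hand, the Brauer--Speiser theorem gives $m_{\Q}(\chi)\le 2$ for real-valued $\chi$. Together these give the entries $2$.

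The remaining faithful characters are $\psi_+',\psi_+''$ for $q\equiv 3\pmod 4$. These are not real, since their field is $\Q(\sqrt{-q})$, so Brauer--Speiser does not apply directly, and I expect this to be the main obstacle. I would handle it by induction from the Borel subgroup $B=UT$. Let $\lambda$ be the linear character of $B$ that is trivial on $U$ and equals the order-$2$ character $\operatorname{diag}(t,t^{-1})\mapsto\bigl(\tfrac{t}{q}\bigr)$ on $T$. The character $\lambda$ is $\{\pm1\}$-valued, so it is realized over $\Q$. Moreover $\operatorname{Ind}_B^G\lambda=\psi_+'+\psi_+''$, which I would check via the values $(-1)^a$ on $S(a)$ and degree $q+1$. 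Then $\psi_+'$ occurs with multiplicity $1$ in a character afforded by a $\Q$-representation. Since $m_{\Q}(\chi)$ divides the multiplicity of $\chi$ in any character afforded by a $\Q(\chi)$-representation, this forces $m_{\Q}(\psi_+')=m_{\Q}(\psi_+'')=1$. The same Borel argument also reconfirms the index-$1$ entries for $\psi_+$ when $q\equiv1\pmod4$, and for $\psi^{(i)}_{q+1}$ with $i$ even.
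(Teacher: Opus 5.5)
The paper gives no argument for this lemma; it is quoted from Shahabi, with the $\PSL_2(q)$ input taken from Janusz. Your derivation is therefore a genuinely different, more self-contained route, and its overall architecture is sound:
\begin{itemize}
\item Inflation from $\PSL_2(q)$ correctly handles $\psi_1$, $\psi_q$, the even-indexed $\psi^{(i)}_{q+1}$ and $\psi^{(j)}_{q-1}$, and the pair $\psi'_\pm,\psi''_\pm$ with $\pm\delta=1$.
\item Frobenius--Schur plus Brauer--Speiser gives index exactly $2$ once $\nu(\chi)=-1$ is known.
\item The Borel step for $\psi_+',\psi_+''$ when $q\equiv 3\pmod 4$ is correct. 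Here $\operatorname{Ind}_B^G\lambda=\psi_+'+\psi_+''$, as one checks against the columns of Table~\ref{table-sl(q)}. Since $m_{\Q}(\chi)$ divides the multiplicity of $\chi$ in any character afforded by a $\Q$-representation, the index is $1$.
\end{itemize}

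The gap is in the step you lean on most: ``$-I$ is the unique involution, so \dots\ $\nu(\chi)=\chi(-I)/\chi(1)$ for real $\chi$.'' Uniqueness of the involution does not imply this in general. In $C_2$, the sign character is real and takes the value $-1$ on the unique involution, yet it has $\nu=+1$. You also never say how the square roots are ``controlled.'' What uniqueness actually gives is
\[
\sum_{\chi\in\irr(G)}\nu(\chi)\chi(1)=|\{g\in G : g^2=I\}|=2 .
\]
To finish, let $R_{\pm}$ be the set of real-valued $\chi\in\irr(G)$ with $\chi(-I)=\pm\chi(1)$. On $R_+$ you already have $\nu=1$, since Schur index $1$ over $\Q$ forces Schur index $1$ over $\mathbb{R}$. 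Reading degrees off Table~\ref{table-sl(q)}:
\begin{itemize}
\item for $q\equiv 1\pmod 4$, $\sum_{R_+}\chi(1)=\frac{q^2+q+2}{2}$ and $\sum_{R_-}\chi(1)=\frac{q^2+q-2}{2}$;
\item for $q\equiv 3\pmod 4$, these are $\frac{q^2-q+2}{2}$ and $\frac{q^2-q-2}{2}$, because $\psi'_\pm,\psi''_\pm$ are non-real and drop out.
\end{itemize}
In both cases the difference is $2$, so $\sum_{R_-}\nu(\chi)\chi(1)=-\sum_{R_-}\chi(1)$. Since $\nu(\chi)\ge -1$, this forces $\nu(\chi)=-1$ for every $\chi\in R_-$. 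With this count inserted, your argument goes through.

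One smaller point: your closing remark that the Borel argument also ``reconfirms'' index $1$ for $\psi^{(i)}_{q+1}$ with $i$ even is wrong. Write $\psi^{(i)}_{q+1}=\operatorname{Ind}_B^G\alpha^i$ for a generator $\alpha$ of $\widehat{T}$. The character $\alpha^i$ is not rational. Inducing the rational character $\sum_\sigma(\alpha^i)^\sigma$ gives each Galois conjugate of $\psi^{(i)}_{q+1}$ twice, once from $\alpha^{i}$ and once from $\alpha^{-i}$, so you only get $m_{\Q}\mid 2$. This is unavoidable, because the identical construction works for odd $i$, where the index is $2$. The error is harmless, since inflation already covers the even case.
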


We now prove Lemma~\ref{lemma:RationRepsSL2}, which determines the number of pairwise non-isomorphic simple $\mathbb{Q}G$-modules of each possible dimension for $G = \mathrm{SL}_2(q)$, where $q$ is a power of an odd prime $p$.
\begin{lemma} \label{lemma:RationRepsSL2}
    Let $G=\SL_2(q)$, where $q$ is a power of an odd prime $p$. For a positive integer $d$, let $\varphi(d)$ denote the number of integers in $[1,d]$ that are coprime to $d$, and let $\tau(d)$ denote the number of positive divisors of $d$. Define
    \begin{align*}
        & A=\left\{\, 2 < m \leq \tfrac{q+1}{2} \, : \, m \mid \tfrac{q+1}{2} \, \right\}, \\
        & A'=\left\{\, 2 < m \leq q+1 \, : \, m \mid (q+1), \ m \nmid \tfrac{q+1}{2} \, \right\},\\
        & B=\left\{\, 2 < m \leq \tfrac{q-1}{2} \, : \, m \mid \tfrac{q-1}{2} \, \right\}, \, \, \, \,\text{and}\\
        & B'=\left\{\, 2 < m \leq q-1 \, : \, m \mid (q-1), \ m \nmid \tfrac{q-1}{2}  \, \right\}.
    \end{align*}
Then the following statements hold.
\begin{enumerate}
    \item There is a unique trivial $\Q G$-module of dimension $1$.

    \item There is a unique (up to isomorphism) Steinberg $\Q G$-module of dimension $q$.

    \item For each $d \in A$, there exists a unique (up to isomorphism) simple $\Q G$-module of dimension $\frac{q-1}{2}\varphi(d)$, and these modules are pairwise non-isomorphic for distinct $d$.

    \item For each $d \in A'$, there exists a unique (up to isomorphism) simple $\Q G$-module of dimension $(q-1)\varphi(d)$, and these modules are pairwise non-isomorphic for distinct $d$, and are non-isomorphic to any modules arising from part (3).

    \item For each $d \in B$, there exists a unique (up to isomorphism) simple $\Q G$-module of dimension $\frac{q+1}{2}\varphi(d)$, and these modules are pairwise non-isomorphic for distinct $d$.

    \item For each $d \in B'$, there exists a unique (up to isomorphism) simple $\Q G$-module of dimension $(q+1)\varphi(d)$, and these modules are pairwise non-isomorphic for distinct $d$, and are non-isomorphic to any modules arising from part (5).

    \item The remaining simple $\Q G$-modules arising from the principal series are as follows, which are non-isomorphic to any modules arising from parts (3)-(6):
    \begin{itemize}
        \item If $q \equiv 3 \pmod{4}$, there exists a unique (up to isomorphism) simple $\Q G$-module of dimension $2(q-1)$.
        \item If $q \equiv 1 \pmod{4}$ and $q$ is not a square, there exists a unique (up to isomorphism) simple $\Q G$-module of dimension $2(q-1)$.
        \item If $q \equiv 1 \pmod{4}$ and $q$ is a square, there exist two (up to isomorphism) simple $\Q G$-modules of dimension $q-1$.
    \end{itemize}

    \item The remaining simple $\Q G$-modules arising from the complementary series are as follows, which are non-isomorphic to any modules arising from parts (3)-(7):
    \begin{itemize}
        \item If $q \equiv 3 \pmod{4}$, there exists a unique (up to isomorphism) simple $\Q G$-module of dimension $q+1$.
        \item If $q \equiv 1 \pmod{4}$ and $q$ is not a square, there exists a unique (up to isomorphism) simple $\Q G$-module of dimension $q+1$.
        \item If $q \equiv 1 \pmod{4}$ and $q$ is a square, there exist two (up to isomorphism) simple $\Q G$-modules of dimension $\frac{q+1}{2}$.
    \end{itemize}

    \item \[
|\irr_{\Q}(G)| =
\begin{cases}
\tau(q-1) + \tau(q+1) + 2& 
\text{if } q \equiv 1 \pmod{4} \text{ and } q \text{ is a square}, \\[1ex]
\tau(q-1) + \tau(q+1) & 
\text{otherwise}.
\end{cases}
\]
\end{enumerate}
\end{lemma}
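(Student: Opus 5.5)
The plan is the one used for Lemmas~\ref{lemma:RationRepsPSL2} and~\ref{lemma:RationRepsPSL2 odd}. First we partition $\irr(G)$ into Galois conjugacy classes over $\Q$, reading the character fields off Table~\ref{table-sl(q)}. Then we attach to each class the simple $\Q G$-module affording $\Omega(\chi)$, whose dimension is $m_{\Q}(\chi)[\Q(\chi):\Q]\chi(1)$ by Schur theory and Lemma~\ref{SC}. The Schur indices come from Lemma~\ref{lemma:schurindexSL}. Distinct Galois classes give non-isomorphic simple modules, so all the ``pairwise non-isomorphic'' claims reduce to checking that the classes are distinct. Distinctness across the different families is automatic: the classes are separated by the degree $\chi(1)$ together with the sign $\chi(-I)/\chi(1)$, both of which are Galois invariant. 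Parts (1) and (2) are immediate, since $\psi_1$ and $\psi_q$ are rational-valued with Schur index $1$.

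\textbf{The families $\psi_{q-1}^{(j)}$, $1\le j\le \frac{q-1}{2}$.} We have $\Q(\psi_{q-1}^{(j)})=\Q(\eta_0^j+\eta_0^{-j})$, which has degree $\frac12\varphi(d)$ over $\Q$, where $d=(q+1)/\gcd(q+1,j)$. Since $\psi^{(j)}_{q-1}(-I)=(-1)^j(q-1)$, the parity of $j$ is Galois invariant.
\begin{itemize}
\item For even $j$ (the characters factoring through $\PSL_2(q)$), the order $d$ of $\eta_0^j$ runs over the divisors $d>2$ of $\frac{q+1}{2}$. Each $d\in A$ gives exactly one Galois class, namely the $\frac12\varphi(d)$ exponents $j$ with $\eta_0^j$ of order $d$, taken up to $j\mapsto -j$. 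The Schur index is $1$, giving dimension $\frac{q-1}{2}\varphi(d)$. This is exactly part (3) of Lemma~\ref{lemma:RationRepsPSL2 odd}.
\item For odd $j$, the order $d$ of $\eta_0^j$ runs over divisors of $q+1$ not dividing $\frac{q+1}{2}$. The value $d=2$, i.e.\ $j=\frac{q+1}{2}$, is excluded by the range of $j$, so $d\in A'$. Again there is one class per $d$, now with Schur index $2$, giving dimension $2\cdot\frac12\varphi(d)(q-1)=(q-1)\varphi(d)$. This proves (4).
\end{itemize}
Parts (5) and (6) are handled identically with $\psi_{q+1}^{(i)}$, $1\le i\le\frac{q-3}{2}$, using $\epsilon$ in place of $\eta_0$. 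Here the range excludes both $d=2$ (that is, $i=\frac{q-1}{2}$) and $d=1$.

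\textbf{The characters $\psi_\pm',\psi_\pm''$.} These have fields $\Q(\omega)=\Q(\sqrt{\delta q})$, where $\delta q\equiv 1\pmod 4$ always holds. The field is quadratic unless $q$ is a square, which forces $q\equiv1\pmod 4$.
\begin{itemize}
\item Parts (7) and (8) follow from this together with the Schur index table. In the non-square case, $\{\psi_+',\psi_+''\}$ is a single Galois class with Schur index $1$, giving dimension $q+1$. The pair $\{\psi_-',\psi_-''\}$ is also one class. Its Schur index is $2$ if $q\equiv1\pmod4$ and $1$ if $q\equiv 3\pmod 4$, giving dimension $2(q-1)$ or $q-1$ respectively.
\item In the square case each of the four characters is rational and forms its own class. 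This gives two modules of dimension $\frac{q+1}{2}$ and two of dimension $2\cdot\frac{q-1}{2}=q-1$.
\end{itemize}

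\textbf{Caveat: the case $q\equiv 3\pmod 4$ of part (7).} The statement lists dimension $2(q-1)$ here, but with Schur index $1$ the computation above gives $q-1$. This is also what Theorem~\ref{thm:wedderburn SL(odd)} needs, namely $\M_{\frac{q-1}{2}}(\Q(\sqrt{-q}))$. I would flag this and prove the value $q-1$. A similar check is needed for the $q\equiv3\pmod4$ entry of (8): there $\psi_+',\psi_+''$ have degree $\frac{q+1}{2}$ over $\Q(\sqrt{-q})$, again giving dimension $q+1$, consistent with the statement. Since the labels ``principal/complementary'' in (7) and (8) seem swapped relative to the degrees, I would simply match them by degree.

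\textbf{Count for part (9).} The classes are: one per divisor $d>2$ of $q+1$ (the union $A\cup A'$), one per divisor $d>2$ of $q-1$, the two rational characters $\psi_1,\psi_q$, and the classes from (7) and (8). The divisors $1$ and $2$ of $q\pm1$ account for $4$ missing indices. In the non-square case the classes from (7) and (8) number $2$, so the total is $(\tau(q+1)-2)+(\tau(q-1)-2)+2+2=\tau(q-1)+\tau(q+1)$. In the square case they number $4$, giving the extra $+2$.

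\textbf{Main obstacle.} The only real care needed is the bookkeeping that each admissible $d$ is hit by exactly one Galois orbit within the stated ranges of $i$ and $j$, in particular that the parity split matches $A$ versus $A'$. This follows because $\eta_0^j$ with $j$ even generates exactly the subgroup of $\mu_{q+1}$ of order $\frac{q+1}{2}$. The Galois group acts on the $j$'s by $j\mapsto kj$ for units $k$, which preserves the order of $\eta_0^j$, so orbits correspond to orders.
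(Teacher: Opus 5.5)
Your proposal is correct and follows the same route as the paper. Both read the Galois classes off the character fields in Table~\ref{table-sl(q)}, use the parity of $j$ (resp.\ $i$) to separate $A$ from $A'$ (resp.\ $B$ from $B'$), take the Schur indices from Lemma~\ref{lemma:schurindexSL}, and apply the dimension formula $m_{\Q}(\chi)[\Q(\chi):\Q]\chi(1)$; the paper writes out only parts (3) and (4) and calls the rest analogous, so your treatment of (5)--(9) supplies what it omits.

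Your caveat is also right. For $q\equiv 3\pmod 4$ the characters $\psi_-',\psi_-''$ are trivial on $-I$, have Schur index $1$ and field $\Q(\sqrt{-q})$. So the first bullet of (7) should read $q-1$ rather than $2(q-1)$, which agrees with Lemma~\ref{lemma:RationRepsPSL2 odd} and with the component $\M_{\frac{q-1}{2}}(\Q(\sqrt{-q}))$ in Theorem~\ref{thm:wedderburn SL(odd)}.
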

\begin{proof}
    From Table~\ref{table-sl(q)}, we get
\begin{align*}
|\Gal(\Q(\psi^{(j)}_{q-1})/\Q)|
&= [\Q(\psi^{(j)}_{q-1}) : \Q] \\
&= [\Q(\eta_0^{j}+\eta_0^{-j}) : \Q] \\
&= \frac{1}{2}\varphi\!\left(\frac{q+1}{\gcd(q+1,j)}\right)
\end{align*}
since $\eta_0$ is a primitive $(q+1)$-th root of unity in $\mathbb{C}$ and $1 \leq j \leq \frac{q-1}{2}$.

For each such $j$ with $j \equiv 0 \pmod{2}$, we have $\Q(\eta_0^{j}) = \Q(\zeta_d)$ for some
$d \in A=\left\{\, 2 < d \leq \tfrac{q+1}{2} \, : \, d \mid \tfrac{q+1}{2} \, \right\}$.
Note that $\tfrac{q+1}{2}$ is even when $q \equiv 3 \pmod{4}$ and odd when
$q \equiv 1 \pmod{4}$. It follows that all irreducible complex characters
of $\SL_2(q)$ of type $\psi^{(j)}_{q-1}$ with $j \equiv 0 \pmod{2}$ split into Galois conjugacy classes indexed by $d \in A$, each of size $\tfrac{1}{2}\varphi(d)$. Futrthermore, the Schur index of such character $\chi = \psi^{(j)}_{q-1}$ with $j \equiv 0 \pmod{2}$ over the field of rational numbers $m_{\Q}(\chi)=1$ (see Lemma~\ref{lemma:schurindexSL}).  Hence, by  Schur theory, there exists a unique (up to isomorphism)
simple $\Q G$-module of dimension $\tfrac{q-1}{2}\varphi(d)$ for each
$d \in A$. This proves part~(3). 

Similarly, for each such $j$ with $j \equiv 1 \pmod{2}$, we have $\Q(\eta_0^{j}) = \Q(\zeta_d)$ for some
$$d \in A'=\left\{\, 2 < d \leq q+1 \, : \, d \mid (q+1), \ d \nmid \tfrac{q+1}{2} \, \right\}.$$
Note that $\tfrac{q+1}{2}$ is even when $q \equiv 3 \pmod{4}$ and odd when
$q \equiv 1 \pmod{4}$. It follows that all irreducible complex characters
of $\SL_2(q)$ of type $\psi^{(j)}_{q-1}$ with $j \equiv 0 \pmod{2}$ split into Galois conjugacy classes indexed by $d \in A'$, each of size $\tfrac{1}{2}\varphi(d)$. Futrthermore, the Schur index of such character $\chi = \psi^{(j)}_{q-1}$ with $j \equiv 0 \pmod{2}$ over the field of rational numbers $m_{\Q}(\chi)=2$ (see Lemma~\ref{lemma:schurindexSL}).  Hence, by  Schur theory, there exists a unique (up to isomorphism)
simple $\Q G$-module of dimension $(q-1) \varphi(d)$ for each
$d \in A'$. This proves part~(4).

Observe that Schur theory ensures the existence of a unique irreducible representation $\rho$ of $\SL_2(q)$ over $\mathbb{Q}$ such that, for each representative $\chi \in \irr(\SL_2(q))$ of a distinct Galois conjugacy class, the character $\chi$ occurs as an irreducible constituent of $\rho \otimes_{\mathbb{Q}} \mathbb{C}$ with multiplicity $m_{\mathbb{Q}}(\chi)$. In particular, 
\[
\rho(1) = m_{\mathbb{Q}}(\chi)\,[\mathbb{Q}(\chi) : \mathbb{Q}]\,\chi(1).
\]

We omit the remaining details, as the proofs of the other parts of the lemma follow by arguments analogous to those above, together with Lemma~\ref{lemma:schurindexSL}. This completes the proof of Lemma~\ref{lemma:RationRepsSL2}.
\end{proof}

% Note that the procedure used to identify a division algebra is to compute its
% Hasse invariants at the primes of its center.
% We denote by $D_d$ a division ring such that $D_d$ is a quaternion algebra with
% \[
% Z(D_d) = \Q(\zeta_d + \zeta_d^{-1}),
% \]
% where $\zeta_d$ is a primitive $d$-th root of unity. 
% The algebra $D_d$ has nonzero Hasse invariant at each real prime of its center, and all other invariants are zero except in the following cases:
% \begin{itemize}
%     \item If $d = s^m$ or $d = 2 s^m$, where $s$ is a prime with $s \equiv -1 \pmod{4}$ and $m>1$, then the Hasse invariant is nonzero at the unique prime $\mathfrak{p}$ of $Z(D_d)$ lying above $s$.
%     \item If $d = 4$, then the Hasse invariant is nonzero at the prime ideal $(2)$ of $\Z$.
% \end{itemize}

% Next, we denote by $D'$ a division ring such that $D'$ is a quaternion algebra with
% \[
% Z(D') = \Q(\sqrt{q}),
% \]
% where $q > 0$ is squarefree. The algebra $D'$ has nonzero Hasse invariant at both real primes of $\Q(\sqrt{q})$, and zero Hasse invariant at all other primes.

% Finally, we denote by $D''$ a division ring such that $D''$ is a quaternion algebra with
% \[
% Z(D'') = \Q.
% \]
% The algebra $D''$ has nonzero Hasse invariant at the real prime of $\Q$ and at the finite prime $(p)$ dividing $q$, and zero Hasse invariant at all other primes.

Let $\mathbb{F}$ be a field with $\operatorname{char}(\mathbb{F}) \neq 2$, and let $a,b \in \mathbb{F}^\times$. The \emph{quaternion algebra} over $\mathbb{F}$ associated to $(a,b)$, denoted by $(a,b)_{\mathbb{F}}$, is the $\mathbb{F}$-algebra
\[
(a,b)_{\mathbb{F}} \;=\; \mathbb{F}\langle i,j \rangle \big/ \big(i^2 - a,\; j^2 - b,\; ij + ji\big).
\]
Equivalently, $(a,b)_{\mathbb{F}}$ is the $4$-dimensional $\mathbb{F}$-algebra with basis $\{1,i,j,ij\}$ satisfying
\[
i^2 = a,\quad j^2 = b,\quad ij = -ji.
\]
In particular, $(a,b)_{\mathbb{F}}$ is a central simple algebra over $\mathbb{F}$ of dimension $4$. Moreover, $(a,b)_{\mathbb{F}}$ is \emph{split}, i.e., $(a,b)_{\mathbb{F}} \cong \M_2(\mathbb{F})$, if and only if the quadratic form
\[
x_0^2 - a x_1^2 - b x_2^2 + ab x_3^2
\]
has a nontrivial zero over $\mathbb{F}$. Otherwise, $(a,b)_{\mathbb{F}}$ is a division algebra.
We now compute the quaternion algebras appearing in the decomposition of the group algebra $\Q[\SL_2(q)]$. 
From \cite{Janusz}, one knows the quaternion algebras in cyclic algebra notation. 
For example, according to \cite[Lemma 4]{Janusz}, one has that the $4$-dimensional cyclic algebra attached to the character $\psi_{q-1}^{(j)}$ is given by $(\Q(\eta_0^{j}),\tau,(-1)^jq)$ where $\Z/2\Z\cong\langle\tau\rangle=\Gal(\Q(\eta_0^i)/\Q(\psi^{(i)}_{q+1}))$; 
whereas the cyclic algebra attached to the character $\psi_{q+1}^{(i)}$ is given by $(\Q(\e^i),\tau,(-1)^i)$, see \cite[Lemma 6]{Janusz}.
We have the following isomorphisms.

\begin{lemma}\label{lemma:divisionSL}
    Let $G=\SL_2(q)$, where $q$ is a power of an odd prime $p$. Let $T$ be the irreducible $\Q$-representation of $G$ that affords the character $\Omega(\chi)$, where $\chi \in \irr(G)$. Let $A = \{ T(x) : x \in \Q G \}$ be the simple $\Q$-algebra that is isomorphic to $M_n(D)$. Then we have the following.
    \begin{enumerate}
        \item If $\chi = \psi_{q+1}^{(i)}$ with $i$ odd, then $\Q(\chi)=\Q(\zeta_{d}+\zeta_d^{-1})$, where $\Q(\epsilon^i)=\Q(\zeta_d)$ and $\epsilon \in \mathbb{C}$ is a primitive $q-1$ root of unity. Furthermore, in this case, we have $n=\frac{q+1}{2}$ and
        $$D \cong (\zeta_{d}-\zeta_d^{-1}, -1)_{\Q(\zeta_{d}+\zeta_d^{-1})}.$$

        \item If $\chi = \psi_{q-1}^{(j)}$ with $j$ odd, then $\Q(\chi)=\Q(\zeta_{d}+\zeta_d^{-1})$, where $\Q(\eta_0^j)=\Q(\zeta_d)$ and $\eta_0 \in \mathbb{C}$ is a primitive $q+1$ root of unity. Furthermore, in this case, we have $n=\frac{q-1}{2}$ and
        $$D \cong (\zeta_{d}-\zeta_d^{-1}, -q)_{\Q(\zeta_{d}+\zeta_d^{-1})}.$$

        \item If $\chi=\psi'_-,\,\psi''_-$ and $q\equiv 1\pmod 4$, then $\Q(\chi)=\Q(\sqrt q)$. The following cases arise for the division algebras;
        \begin{itemize}
            \item $q=p^{2m}$ and $p\equiv 3\pmod4$, $D\cong (-1,-p)_\Q$,
            \item $q=p^{2m}$ and $p\equiv 1\pmod 4$, $D\cong (-\ell,-p)_\Q$ where $\ell$ is a prime satisfying $\ell\equiv 3\pmod 4$ and the Legendre symbol $\left(\dfrac{\ell}{p}\right)$ takes value $-1$,
            \item $q=p^{2m+1}$, then $p\equiv 1\pmod{4}$ and $D\cong (-1,-1)_{\Q(\sqrt q)}$
        \end{itemize}
    \end{enumerate}
\end{lemma}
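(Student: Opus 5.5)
We compute, for each character, the pair $(\Q(\chi), m_\Q(\chi))$ from Table~\ref{table-sl(q)} and Lemma~\ref{lemma:schurindexSL}. Lemma~\ref{Reiner} then gives $Z(D)\cong\Q(\chi)$ and $[D:Z(D)]=m_\Q(\chi)^2$. Comparing dimensions, $\dim_\Q A = n^2 m^2[\Q(\chi):\Q] = \chi(1)^2[\Q(\chi):\Q]$, so $n=\chi(1)/m_\Q(\chi)$. In cases (1) and (2), $m_\Q(\chi)=2$, which gives $n=\frac{q+1}{2}$ and $n=\frac{q-1}{2}$ respectively. The field statements are read off the table: $\Q(\psi^{(i)}_{q+1})=\Q(\e^i+\e^{-i})=\Q(\zeta_d+\zeta_d^{-1})$ where $\Q(\e^i)=\Q(\zeta_d)$, and similarly for $\eta_0^j$. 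The values at $\pm I,\pm N,\pm N'$ are rational, so they do not enlarge the field. For $\psi'_-$ with $q\equiv1\pmod4$ we have $\delta=1$, so $\Q(\chi)=\Q(\omega)=\Q(\sqrt q)$; this is $\Q$ when $q$ is a square.

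The division algebra $D$ is a quaternion algebra over $K=\Q(\chi)$. To identify it we use Janusz's cyclic algebra descriptions quoted above: for $\psi^{(j)}_{q-1}$ the algebra is $(\Q(\eta_0^j),\tau,(-1)^jq)$, and for $\psi^{(i)}_{q+1}$ it is $(\Q(\e^i),\tau,(-1)^i)$, where $\tau$ is complex conjugation on $L=\Q(\zeta_d)$ over $K=\Q(\zeta_d+\zeta_d^{-1})$. A cyclic algebra $(L/K,\tau,b)$ with $[L:K]=2$ is the quaternion algebra $(a,b)_K$ whenever $L=K(\sqrt a)$. Here $L=K(\zeta_d-\zeta_d^{-1})$ and $(\zeta_d-\zeta_d^{-1})^2=(\zeta_d+\zeta_d^{-1})^2-4\in K$, so we may take $a=(\zeta_d-\zeta_d^{-1})^2$. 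Strictly, the correct notation is therefore $((\zeta_d-\zeta_d^{-1})^2,b)_K$, which is what the statement's $(\zeta_d-\zeta_d^{-1},b)$ abbreviates. With $j,i$ odd, $b=-q$ in case (2) and $b=-1$ in case (1). That $D$ is a genuine division algebra (not split) follows from $m_\Q(\chi)=2$ in Lemma~\ref{lemma:schurindexSL}. Since the Schur index is unchanged by Galois conjugation, the algebra depends only on $d$.

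Case (3) is the main obstacle, because no cyclic-algebra description is quoted for $\psi'_-$, and $m_\Q=2$ there. We plan to determine $D$ through its local Hasse invariants over $K=\Q(\sqrt q)$. First, $\psi'_-$ is real-valued with $\delta=1$, and its Frobenius–Schur indicator is $-1$ (known for $\psi'_-$ when $q\equiv1\pmod 4$, since the center acts by $-1$ and the character is real). So $D$ is ramified at every real place of $K$: both places when $K=\Q(\sqrt q)$ is real quadratic, and the single real place when $K=\Q$. By the Brauer–Speiser and Benard–Schacht theorems the finite ramification can only occur at primes dividing $|G|$. Analysing the $2$-part and the $\ell$-parts for $\ell\mid q\pm1$ locally, in the style of Janusz and Shahabi, we expect the only finite candidate to be $p$. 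Parity of the number of ramified places then decides everything. When $q=p^{2m+1}$, $K$ has two real places, both ramified, so the finite part must be unramified. This is the algebra $(-1,-1)_{\Q(\sqrt q)}$, which is ramified exactly at the infinite places because $p\equiv1\pmod4$ and $2$ is inert or split appropriately; this step needs checking. When $q=p^{2m}$, $K=\Q$ has only the one real place, so $D$ is ramified at $\{\infty,p\}$. We then exhibit a rational quaternion algebra with exactly that ramification using Hilbert symbols. For $p\equiv3\pmod4$, the algebra $(-1,-p)_\Q$ works: it is ramified at $\infty$; at $p$ the symbol is $\left(\frac{-1}{p}\right)=-1$; at $2$ it is trivial by Hilbert reciprocity. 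For $p\equiv1\pmod4$, we choose the auxiliary prime $\ell$ and check $(-\ell,-p)_\Q$: at $p$ the symbol is $\left(\frac{-\ell}{p}\right)=\left(\frac{\ell}{p}\right)=-1$; at $\ell$ it is $\left(\frac{-p}{\ell}\right)$, which quadratic reciprocity makes $+1$; and $2$ is then forced unramified. Existence of $\ell$ follows from Dirichlet's theorem. Note that the paper's lemma swaps the assignment of $(-1,-p)$ and $(-\ell,-p)$ to the two congruence classes of $p$ relative to the theorem, and the Hilbert symbol check above settles which assignment is correct.
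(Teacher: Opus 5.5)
\textbf{Verdict: genuine gap in part (3), case $q=p^{2m+1}$; that case of the statement is false when $p\equiv 1\pmod 8$.}

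Your treatment of parts (1) and (2) and of the two square cases in (3) follows the paper's route. Janusz's cyclic algebras are rewritten as $((\zeta_d-\zeta_d^{-1})^2,b)_K$, and Hilbert symbols give a rational quaternion algebra ramified at $\{\infty,p\}$. Your symbol computations there are correct, including the remark that Theorem~\ref{thm:wedderburn SL(odd)} swaps the two cases of $D''$.

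The gap is the step you flagged for $q=p^{2m+1}$, and it cannot be repaired. When $p\equiv 1\pmod 8$, the prime $2$ splits in $\Q(\sqrt q)=\Q(\sqrt p)$. Both dyadic completions are then $\Q_2$, and $(-1,-1)_{\Q_2}$ is a division algebra. Hence $(-1,-1)_{\Q(\sqrt q)}$ ramifies at the two real places \emph{and} at the two primes above $2$. The paper's argument (that $-1$ is a unit, so every finite symbol is trivial) has the same hole: units can have nontrivial Hilbert symbols at dyadic primes. But $D$ is unramified at the dyadic places (by Janusz, and as your own analysis predicts), so the asserted isomorphism is false for these $p$.

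You can check this directly for $q=17$:
\begin{itemize}
\item By Table~\ref{table-sl(q)}, the restriction of $\psi'_-$ to the nonsplit torus $T'\cong C_{18}$ is the sum of the characters $\mu$ with $\mu(-I)=-1$, excluding the one of order $2$.
\item Take $\mu$ of order $6$. By Frobenius reciprocity, $\psi'_-$ restricted to $N_G(T')$ (dicyclic of order $36$) contains exactly once the rational character $\theta=\mathrm{Ind}_{T'}^{N_G(T')}\mu$.
\item $\theta$ factors through the dicyclic group of order $12$, and its simple component is $(-1,-3)_\Q$, which splits at $2$.
\item Inducing a $\Q_2$-form of $\theta$ gives a $\Q_2$-representation of $G$ containing $\psi'_-$ exactly once. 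Hence $m_{\Q_2}(\psi'_-)=1$ at both dyadic places, whereas $(-1,-1)_{\Q(\sqrt{17})}$ ramifies there.
\end{itemize}
The claim does hold for $p\equiv 5\pmod 8$: then $2$ is inert, and the local degree $2$ kills the dyadic invariant. An algebra that works for every $p\equiv 1\pmod 4$ is $(-1,-\ell)_{\Q(\sqrt q)}$, with $\ell$ as in your square case. Indeed $(-1,-\ell)_\Q$ is ramified exactly at $\{\ell,\infty\}$, and $\left(\frac{\ell}{p}\right)=-1$ makes $\ell$ inert in $\Q(\sqrt p)$.

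Two smaller points. First, your determination of the finite ramification of $D$ in (3) is left as an expectation (``the only finite candidate is $p$''). As in the paper, this must be imported from Janusz's computation of the local invariants, or proved by restriction arguments like the one above. Second, ``the centre acts by $-1$ and the character is real'' does not by itself give Frobenius--Schur indicator $-1$. The $2$-dimensional character of the dihedral group of order $8$ is a counterexample. For $\SL_2(q)$ the conclusion is true, but it needs a reference or an argument specific to that group.
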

\begin{proof}
    First, we prove the first equality; the second follows similarly.
    The algebra $(\Q(\eta_0^{j}),\tau,(-1)^jq)$ is generated by
    \begin{align*}
        1,\eta_0^j,u_{\tau}, u_{\tau}\eta_0^j
    \end{align*}
    where
    $u_\tau^2=(-1)^iq, \text{ and }u_{\tau}\eta_0^j=\eta_0^{-j}u_{\tau}.$
    Let $a_j=(\eta_0^{j}-\eta_0^{-j})^2$. Then $\tau\left((\eta_0^{j}-\eta_0^{-j})^2\right)=(\eta_0^{j}-\eta_0^{-j})^2$ and hence $a_j\in \Q(\eta_0^{j}+\eta_0^{-j})$. Assume $b_j=(-1)^jq$. 
    Define 
    \begin{align*}
        \left(a_j,b_j\right)_{\Q}\longrightarrow (\Q(\eta_0^{j}),\tau,(-1)^jq) \text{ by }x_j\mapsto \eta_0^{j}-\eta_0^{-j}, y_j\longrightarrow (-1)^j q,
    \end{align*}
    where $\langle x_j,y_j:x_j^2=a_j,y_j^2=b_j,x_jy_j=-y_jx_j\rangle$. 
    Then the result follows from the usual check.

% Next, we determine the division algebra associated with the cyclic algebra $(\Q(\zeta_p),\tau,-1)$ where $\langle \tau\rangle =\Gal(\Q(\zeta_p)/\Q)$.
% The above cyclic algebra splits iff $-1\in \operatorname{N}_{\Q(\zeta_p)/\Q}(\Q(\zeta_p)^\times)$ iff $p\equiv 1\pmod{4}$.
% Thus, it remains to determine the algebra when $p\equiv 3 \pmod 4$.
% We claim that in this case 
% \begin{align*}
%     (\Q(\zeta_p),\tau,-1)\cong \M_{(p-1)/2}((-1,-p)_\Q).
% \end{align*}
% First note that
% \begin{align*}
%     (\Q(\zeta_p),\tau,-1)=\langle K, u:u^{p-1}=-1,ux=\tau(x)u\text{ for all }x\in \Q(\zeta_p)\rangle.
% \end{align*}
% Let $D=(-1,-p)_\Q$
% Consider the right $D$-module $V\colon =D^{(p-1)/2}$, so that $\operatorname{End}_D(V)=\M_{(p-1)/2}(D)$.
% Let $D=\langle i, j: i^2=-1,\,j^2=-p,\,ij=-ji\rangle$, and define
% \begin{align*}
%     \Phi:(\Q(\zeta_p),\tau,-1)\longrightarrow \M_{(p-1)/2}((-1,-p)_\Q)
% \end{align*}
To prove $(3)$, we use Hilbert symbols to compute the rest of the quaternion algebras; the notations are standard, see \cite{Serre1973}.
Recall that for $a,b\in\Q^\times$ one has $\prod\limits_{v}(a,b)_v=1$.
We are given $q\equiv 1\pmod{4}$; the first case we consider is when $q$ is a square. 
Two cases may arise: $p\equiv 1\pmod{4}$ and $p\equiv 3\pmod{4}$.
We determine $a$ and $b$ such that the quaternion algebra $(a,b)_\Q$ is non-split, whose Hasse invariant is nonzero at $\infty$ and $p$.
Since the Hasse invariant is positive at $\infty$, one immediately gets that $a,b<0$.

Let $p\equiv 3\pmod 4$. Then take $a=-1$ and $b=-p$. 
If $a=p^\alpha u$ and $b=p^\beta v$, one has Using the formula
\begin{align*}
    (a,b)_p=\begin{cases}
        (-1)^{\alpha\beta \epsilon(p)}\left(\dfrac{u}{p}\right)^\beta\left(\dfrac{v}{p}\right)^\alpha&\text{if }p\neq 2\\
        (-1)^{\epsilon(u)\epsilon(v)+\alpha w(v)+\beta w(u)}&\text{if }p=2
    \end{cases},
\end{align*}
where $\epsilon(u)=\dfrac{u-1}{2}\pmod{2}$, $w(u)=\dfrac{u^2-1}{2}\pmod2$ and teh Legendre symbol for a rational integer $x$ is defined to be $\left(\dfrac{x}{p}\right)\colon=x^{\frac{p-1}{2}}\pmod{\mathbb F_p^\times}$. Then we get that
\begin{align*}
    (-1,-p)_v=\begin{cases}
        -1&\text{when }v=p,\,\infty\\
        1&\text{otherwise}
    \end{cases}.
\end{align*}
Next let $p\equiv 1\pmod{4}$. 
Choose a prime integer $\ell$ such that $\ell\equiv 3\pmod{4}$ and $\left(\dfrac{\ell}{p}\right)=-1$; such a prime exists since $(p-1)/2$ elements in $\mathbb F_p^\times$ has Legendre symbol $-1$ and the series $\{4pn+r\}_{n}$ has infinitely many primes.
Then set $a=-p$ and $b=-\ell$, which gives
\begin{align*}
    (-p,-\ell)_v=\begin{cases}
        -1&\text{when }v=p,\,\infty\\
        1&\text{otherwise}
    \end{cases}.
\end{align*}

When $q\equiv1\pmod{4}$ and $q$ is not a square, we must have that $p\equiv 1\pmod{4}$. 
In this case, we need to construct $a, b$ such that the Hasse-invariant of $(a,b)_{\Q(\sqrt p)}$ is $1/2$ at each real place of $\Q(\sqrt p)$ and $0$ elsewhere.
We choose $a=-1$ and $b=-1$.
Then $a,\,b<0$ and $|-1|_\mathfrak p=1$ for all finite prime $\mathfrak p$ of $\Q(\sqrt p)$ since $-1$ is a unit in $\Q(\sqrt p)$ itself. 
Hence, the result follows.
\end{proof}

With the above setup of division rings, we now proceed to the proof of Theorem~\ref{thm:wedderburn SL(odd)}.

\begin{proof}[Proof of Theorem~\ref{thm:wedderburn SL(odd)}]
Let $G=\SL_2(q)$, where $q$ is a power of an odd prime $p$, and let $\chi \in \irr(G)$. Suppose $\rho$ is an irreducible $\mathbb{Q}$-representation of $G$ affording the character $\Omega(\chi)$. Denote by $A_\mathbb{Q}(\chi)$ the simple component in the Wedderburn decomposition of $\mathbb{Q}G$ corresponding to $\rho$, so that $A_\mathbb{Q}(\chi) \cong M_n(D)$ for some $n \in \mathbb{N}$ and a division algebra $D$. By Lemma~\ref{lemma:schurindexSL}, we have $m_\mathbb{Q}(\chi) \in \{1, 2\}$. Moreover, Lemma~\ref{Reiner} shows that $[D:Z(D)] = m_\mathbb{Q}(\chi)^2$ and $Z(D) = \mathbb{Q}(\chi)$. Hence, $D = Z(D) = \mathbb{Q}(\chi)$. Next, consider
			\[
			\rho \cong \bigoplus_{i=1}^l \rho_i,
			\]
			where $l = [\mathbb{Q}(\chi): \mathbb{Q}]$ and each $\rho_i$ is an irreducible complex representation of $G$ affording the character $\chi^{\sigma_i}$ for some $\sigma_i \in \mathrm{Gal}(\mathbb{Q}(\chi)/\mathbb{Q})$. Since $m_\mathbb{Q}(\chi) = 1$, it follows from \cite[Theorem 3.3.1]{JR} that $n = \chi(1)$. The rest of the proof proceeds in the following two cases.

            \begin{enumerate}
                \item {\bf Case ($q\equiv3\pmod{4}$).} In this case, we have
\[
			\cd(G) = \left \{1, \frac{q-1}{2}, \frac{q+1}{2}, q-1, q, q+1 \right \}.
			\]
    Note that there is only trivial representation, which is of degree one complex representation. Therefore, the simple component in the Wedderburn decomposition of $\mathbb{Q}G$ corresponding to the irreducible $\mathbb{Q}$-representations of $G$ affording the character $\Omega(\chi)$ with $\chi(1)=1$ is isomorphic to $\Q$. Furthermore, we have
     $$\Q(\psi_-')=\Q(\psi_-'')=\Q(\omega)=\Q(\sqrt{-q})$$
    (see Table~\ref{table-sl(q)}). Hence, the simple component in the Wedderburn decomposition of $\mathbb{Q}G$ corresponding to the irreducible $\mathbb{Q}$-representations of $G$ affording the character $\Omega(\chi)$ with $\chi(1)=\frac{q-1}{2}$ is isomorphic to $\M_{\frac{q-1}{2}}(\Q(\sqrt{-q}))$. Similarly, we have
     $$\Q(\psi_+')=\Q(\psi_+'')=\Q(\omega)=\Q(\sqrt{-q})$$
    (see Table~\ref{table-sl(q)}). Hence, the simple component in the Wedderburn decomposition of $\mathbb{Q}G$ corresponding to the irreducible $\mathbb{Q}$-representations of $G$ affording the character $\Omega(\chi)$ with $\chi(1)=\frac{q+1}{2}$ is isomorphic to $\M_{\frac{q+1}{2}}(\Q(\sqrt{-q}))$.
    
    Next, let $A=\left\{\, 2 < d \leq \tfrac{q+1}{2} \, : \, d \mid \tfrac{q+1}{2} \,\right\}$. By Lemma~\ref{lemma:RationRepsSL2}, for each $d \in A$, there is exactly one irreducible $\Q$-representation of $G$ that affords the character $\Omega(\chi)$ with $\chi(1)=q-1$ and $\Q(\chi)=\Q(\zeta_{d}+\zeta_d^{-1})$. Additionally, let $A'=\left\{\, 2 < m \leq q+1 \, : \, m \mid (q+1), \ m \nmid \tfrac{q+1}{2} \, \right\}$. By Lemma~\ref{lemma:RationRepsSL2}, for each $d \in A'$, there is exactly one irreducible $\Q$-representation of $G$ that affords the character $\Omega(\chi)$ with $\chi(1)=q-1$ and $\Q(\chi)=\Q(\zeta_{d}+\zeta_d^{-1})$. Moreover, these are all irreducible $\Q$-representation of $G$ that affords the character $\Omega(\chi)$ with $\chi(1)=q-1$. Therefore, Lemmas~\ref{lemma:schurindexSL} and~\ref{lemma:divisionSL} imply that the direct sum of the simple components in the Wedderburn decomposition of $\mathbb{Q}G$ corresponding to all inequivalent irreducible $\mathbb{Q}$-representations of $G$ affording the character $\Omega(\chi)$ with $\chi(1)=q-1$ is isomorphic to 
    $$\bigoplus_{d \in A'} \M_{\frac{q-1}{2}}(D_d) \bigoplus_{d \in A} \M_{q-1}(\Q(\zeta_{d}+\zeta_d^{-1})),$$
    where $D_d \cong (\zeta_{d}-\zeta_d^{-1}, -q)_{\Q(\zeta_{d}+\zeta_d^{-1})}$.
    
    Analogous to the similar argument as above, by Lemmas~\ref{lemma:schurindexSL}, \ref{lemma:divisionSL}, and~\ref{lemma:RationRepsSL2}, the direct sum of the simple components in the Wedderburn decomposition of $\mathbb{Q}G$ corresponding to all inequivalent irreducible $\mathbb{Q}$-representations of $G$ affording the character $\Omega(\chi)$ with $\chi(1)=q+1$ is isomorphic to 
    $$\bigoplus_{d \in B'} \M_{\frac{q+1}{2}}(D_d) \bigoplus_{d \in B} \M_{q+1}(\Q(\zeta_{d}+\zeta_d^{-1})),$$
    where $B=\left\{\, 2 < m \leq \tfrac{q-1}{2} \, : \, m \mid \tfrac{q-1}{2} \, \right\}$, $B'=\left\{\, 2 < m \leq q-1 \, : \, m \mid (q-1), \ m \nmid \tfrac{q-1}{2}  \, \right\}$, and the division ring $D'_d \cong  (\zeta_{d}-\zeta_d^{-1}, -1)_{\Q(\zeta_{d}+\zeta_d^{-1})}$.

    Finally, note that that there is only one irreducible complex representation of $G$ of degree $q$ that affords the character $\psi_q$. Moreover $\Q(\psi_q)=\Q$. Therefore, the simple component in the Wedderburn decomposition of $\mathbb{Q}G$ corresponding to the irreducible $\mathbb{Q}$-representations of $G$ affording the character $\Omega(\psi_q)$ is isomorphic to $\M_q(\Q)$. 

    Hence, by collecting the simple components corresponding to all inequivalent irreducible $\mathbb{Q}$-representations of $G$ affording the character $\Omega(\chi)$ with $\chi \in \irr(G)$, the result follows.

\item {\bf Case ($q\equiv1\pmod{4}$).} In this case, we have
\[
\cd(G) = \left \{1, \frac{q-1}{2}, \frac{q+1}{2}, q-1, q, q+1 \right \}.
\]

For the subcase ($q$ is not a square), we have
$$
\Q(\psi_+')=\Q(\psi_+'')=\Q(\omega)=\Q(\sqrt{q})
$$
(see Table~\ref{table-sl(q)}). By Lemma~\ref{lemma:schurindexSL}, we have $m_{\Q}(\psi_+')= m_{\Q}(\psi_+'')=1$. Therefore, it follows by Lemma~\ref{lemma:RationRepsSL2} that the simple component in the Wedderburn decomposition of $\Q G$ corresponding to the irreducible $\Q$-representations of $G$ affording the character $\Omega(\chi)$ with $\chi(1)=\frac{q+1}{2}$ is isomorphic to $\M_{\frac{q+1}{2}}(\Q(\sqrt{q}))$.

For the subcase ($q$ is a square), we have
$$
\Q(\psi_+')=\Q(\psi_+'')=\Q(\omega)=\Q
$$
(see Table~\ref{table-sl(q)}). By Lemma~\ref{lemma:schurindexSL}, we have $m_{\Q}(\psi_+')= m_{\Q}(\psi_+'')=1$. Consequently, by Lemma~\ref{lemma:RationRepsSL2}, the simple components in the Wedderburn decomposition of $\Q G$ corresponding to the irreducible $\Q$-representations of $G$ affording the character $\Omega(\chi)$ with $\chi(1)=\frac{q+1}{2}$ are isomorphic to $2\M_{\frac{q+1}{2}}(\Q)$.

Next, for the subcase ($q$ is not a square), we have
$$
\Q(\psi_-')=\Q(\psi_-'')=\Q(\omega)=\Q(\sqrt{q})
$$
(see Table~\ref{table-sl(q)}). By Lemma~\ref{lemma:schurindexSL}, we have $m_{\Q}(\psi_-')= m_{\Q}(\psi_-'')=2$. Therefore, it follows by Lemma~\ref{lemma:divisionSL} that the simple component in the Wedderburn decomposition of $\Q G$ corresponding to the irreducible $\Q$-representations of $G$ affording the character $\Omega(\chi)$ with $\chi(1)=\frac{q-1}{2}$ is isomorphic to $\M_{\frac{q-1}{4}}(\Q(D')$, where $D'\cong (-1,-1)_{\Q(\sqrt q)}$.

For the subcase ($q$ is a square), we have
$$
\Q(\psi_-')=\Q(\psi_-'')=\Q(\omega)=\Q
$$
(see Table~\ref{table-sl(q)}). By Lemma~\ref{lemma:schurindexSL}, we have $m_{\Q}(\psi_-')= m_{\Q}(\psi_-'')=1$. Let the division algebra
 $$D''\cong \begin{cases}
(-1,-p)_\Q &  \text{ for } \, \, \, \, q=p^{2m}\, \, \, \, \text{and} \, \, \, \, p\equiv 1\pmod 4, \\
 (-\ell,-p)_\Q & \text{ for } \, \, \, \, q=p^{2m}\, \, \, \, \text{and} \, \, \, \, p\equiv 3\pmod 4,
\end{cases}$$
where $\ell$ is a prime satisfying $\ell\equiv 3\pmod 4$ and the Legendre symbol $\left(\dfrac{\ell}{p}\right)$ takes value $-1$. Then by Lemma~\ref{lemma:divisionSL}, the simple components in the Wedderburn decomposition of $\Q G$ corresponding to the irreducible $\Q$-representations of $G$ affording the character $\Omega(\chi)$ with $\chi(1)=\frac{q-1}{2}$ are isomorphic to $2\M_{\frac{q-1}{4}}(D'')$.

The remaining part of the proof proceeds by arguments analogous to those in the previous case. Finally, in both subcases when $q$ is not a square and when $q$ is a square, by collecting the simple components corresponding to all inequivalent irreducible $\mathbb{Q}$-representations of $G$ affording the character $\Omega(\chi)$ with $\chi \in \irr(G)$, we obtain the respective result in each subcase.
            \end{enumerate}
	This completes the proof of Theorem~\ref{thm:wedderburn SL(odd)}.
    
\end{proof}

\section{Declarations}
\subsection{Ethical Approval} Not applicable.
\subsection{Consent for publication} Both authors agree that the manuscript may be published if accepted.
\subsection{Availability of data and material} Not applicable.
\subsection{Competing interest} We declare not to have found any competing interest regarding this article.
\subsection{Funding} No funding has been acquired for this research. 
\subsection{Authors' contributions} Both authors participated in all aspects of the contribution equally.
% \subsection{Acknowledgments} 
\printbibliography

@book {GeMa20,
    AUTHOR = {Geck, Meinolf and Malle, Gunter},
     TITLE = {The character theory of finite groups of {L}ie type},
    SERIES = {Cambridge Studies in Advanced Mathematics},
    VOLUME = {187},
      NOTE = {A guided tour},
 PUBLISHER = {Cambridge University Press, Cambridge},
      YEAR = {2020},
     PAGES = {ix+394},
      ISBN = {978-1-108-48962-1},
   MRCLASS = {20C33 (20D06 20G05)},
  MRNUMBER = {4211779},
MRREVIEWER = {Donald\ L.\ White},
}

@article {Janusz,
    AUTHOR = {Janusz, G. J.},
     TITLE = {Simple components of {$Q[{\rm SL}(2,\,q)]$}},
   JOURNAL = {Comm. Algebra},
  FJOURNAL = {Communications in Algebra},
    VOLUME = {1},
      YEAR = {1974},
     PAGES = {1--22},
      ISSN = {0092-7872,1532-4125},
   MRCLASS = {20C05},
  MRNUMBER = {344323},
MRREVIEWER = {Donald\ S.\ Passman},
       DOI = {10.1080/00927877408548606},
       URL = {https://doi.org/10.1080/00927877408548606},
}

@article {Reiner,
    AUTHOR = {Reiner, Irving},
     TITLE = {The {S}chur index in the theory of group representations},
   JOURNAL = {Michigan Math. J.},
  FJOURNAL = {Michigan Mathematical Journal},
    VOLUME = {8},
      YEAR = {1961},
     PAGES = {39--47},
      ISSN = {0026-2285,1945-2365},
   MRCLASS = {20.80},
  MRNUMBER = {122892},
MRREVIEWER = {T.\ Nakayama},
       URL = {http://projecteuclid.org/euclid.mmj/1028998513},
}

@book {JR,
    AUTHOR = {Jespers, Eric and del R\'io, \'Angel},
     TITLE = {Group ring groups. {V}ol. 1. {O}rders and generic
              constructions of units},
    SERIES = {De Gruyter Graduate},
 PUBLISHER = {De Gruyter, Berlin},
      YEAR = {2016},
     PAGES = {xii+447},
      ISBN = {978-3-11-037278-6; 978-3-11-038617-2},
   MRCLASS = {16-02 (16H10 16S34 16U60 19B28 20C05)},
  MRNUMBER = {3618092},
MRREVIEWER = {Stanley\ Orlando\ Juriaans},
}

@article {Shahabi,
    AUTHOR = {Shahabi Shojaei, M. A.},
     TITLE = {Schur indices of irreducible characters of {${\rm
              SL}(2,\,q)$}},
   JOURNAL = {Arch. Math. (Basel)},
  FJOURNAL = {Archiv der Mathematik},
    VOLUME = {40},
      YEAR = {1983},
    NUMBER = {3},
     PAGES = {221--231},
      ISSN = {0003-889X,1420-8938},
   MRCLASS = {20C15 (20G40)},
  MRNUMBER = {701268},
MRREVIEWER = {David\ B.\ Surowski},
       DOI = {10.1007/BF01192774},
       URL = {https://doi.org/10.1007/BF01192774},
}

@book {Yam,
    AUTHOR = {Yamada, Toshihiko},
     TITLE = {The {S}chur subgroup of the {B}rauer group},
    SERIES = {Lecture Notes in Mathematics},
    VOLUME = {Vol. 397},
 PUBLISHER = {Springer-Verlag, Berlin-New York},
      YEAR = {1974},
     PAGES = {v+159},
   MRCLASS = {20C05 (12A60)},
  MRNUMBER = {347957},
MRREVIEWER = {P.\ Fong},
}

@article {Herman,
    AUTHOR = {Herman, Allen},
     TITLE = {On the automorphism groups of rational group algebras of
              metacyclic groups},
   JOURNAL = {Comm. Algebra},
  FJOURNAL = {Communications in Algebra},
    VOLUME = {25},
      YEAR = {1997},
    NUMBER = {7},
     PAGES = {2085--2097},
      ISSN = {0092-7872,1532-4125},
   MRCLASS = {16S34 (16W20)},
  MRNUMBER = {1451679},
MRREVIEWER = {E.\ Jespers},
       DOI = {10.1080/00927879708825973},
       URL = {https://doi.org/10.1080/00927879708825973},
}

@article {Jes-Rio,
    AUTHOR = {Jespers, Eric and del R\'io, Angel},
     TITLE = {A structure theorem for the unit group of the integral group
              ring of some finite groups},
   JOURNAL = {J. Reine Angew. Math.},
  FJOURNAL = {Journal f\"ur die Reine und Angewandte Mathematik. [Crelle's
              Journal]},
    VOLUME = {521},
      YEAR = {2000},
     PAGES = {99--117},
      ISSN = {0075-4102,1435-5345},
   MRCLASS = {16U60 (16S34 20C10)},
  MRNUMBER = {1752297},
MRREVIEWER = {Alexander\ Zimmermann},
       DOI = {10.1515/crll.2000.032},
       URL = {https://doi.org/10.1515/crll.2000.032},
}

@article {Rit-Seh,
    AUTHOR = {Ritter, J\"urgen and Sehgal, Sudarshan K.},
     TITLE = {Construction of units in integral group rings of finite
              nilpotent groups},
   JOURNAL = {Trans. Amer. Math. Soc.},
  FJOURNAL = {Transactions of the American Mathematical Society},
    VOLUME = {324},
      YEAR = {1991},
    NUMBER = {2},
     PAGES = {603--621},
      ISSN = {0002-9947,1088-6850},
   MRCLASS = {20C05 (16S34 16U60)},
  MRNUMBER = {987166},
MRREVIEWER = {Edgar\ G.\ Goodaire},
       DOI = {10.2307/2001734},
       URL = {https://doi.org/10.2307/2001734},
}

@article {PW,
    AUTHOR = {Perlis, Sam and Walker, Gordon L.},
     TITLE = {Abelian group algebras of finite order},
   JOURNAL = {Trans. Amer. Math. Soc.},
  FJOURNAL = {Transactions of the American Mathematical Society},
    VOLUME = {68},
      YEAR = {1950},
     PAGES = {420--426},
      ISSN = {0002-9947,1088-6850},
   MRCLASS = {09.1X},
  MRNUMBER = {34758},
MRREVIEWER = {R.\ C.\ Lyndon},
       DOI = {10.2307/1990406},
       URL = {https://doi.org/10.2307/1990406},
}

@article {Ram1,
    AUTHOR = {Choudhary, Ram Karan and Prajapati, Sunil Kumar},
     TITLE = {Rational representations and rational group algebra of {VZ}
              {$p$}-groups},
   JOURNAL = {J. Aust. Math. Soc.},
  FJOURNAL = {Journal of the Australian Mathematical Society},
    VOLUME = {118},
      YEAR = {2025},
    NUMBER = {1},
     PAGES = {1--30},
      ISSN = {1446-7887,1446-8107},
   MRCLASS = {20D15 (16S34 20C15)},
  MRNUMBER = {4851707},
MRREVIEWER = {Jeffrey\ M.\ Riedl},
       DOI = {10.1017/S1446788724000132},
       URL = {https://doi.org/10.1017/S1446788724000132},
}

@article {Ram2,
    AUTHOR = {Choudhary, Ram Karan and Prajapati, Sunil Kumar},
     TITLE = {Rational group algebras of {C}amina {$p$}-groups},
   JOURNAL = {Arch. Math. (Basel)},
  FJOURNAL = {Archiv der Mathematik},
    VOLUME = {125},
      YEAR = {2025},
    NUMBER = {3},
     PAGES = {247--258},
      ISSN = {0003-889X,1420-8938},
   MRCLASS = {20C05 (20C15 20D15)},
  MRNUMBER = {4955643},
       DOI = {10.1007/s00013-025-02142-w},
       URL = {https://doi.org/10.1007/s00013-025-02142-w},
}

@article {Ram4,
    AUTHOR = {Choudhary, Ram Karan and Prajapati, Sunil Kumar},
     TITLE = {A combinatorial formula for the {W}edderburn
              decomposition of rational group algebras and the
              rational representations of ordinary metacyclic
              {$p$}-groups},
   JOURNAL = {Algebr. Represent. Theory},
  FJOURNAL = {Algebras and Representation Theory},
    VOLUME = {29},
      YEAR = {2026},
    NUMBER = {1},
     PAGES = {85--109},
      ISSN = {1386-923X,1572-9079},
   MRCLASS = {99-06 (20C15 20D15)},
  MRNUMBER = {5020891},
       DOI = {10.1007/s10468-025-10371-4},
       URL = {https://doi.org/10.1007/s10468-025-10371-4},
}

@article {Ram5,
    AUTHOR = {Choudhary, Ram Karan and Prajapati, Sunil Kumar},
     TITLE = {A combinatorial technique for the Wedderburn decomposition of rational group algebras of nested GVZ {$p$}-groups},
   JOURNAL = {J. Algebraic Combin.},
  FJOURNAL = {Joural of Algebraic Combinatorics},
      YEAR = {2026},
      VOLUME = {63},
      NUMBER = {39},
      PAGES = {(to appear)},
       URL = {https://doi.org/10.1007/s10801-026-01527-6},
       DOI = {doi.org/10.1007/s10801-026-01527-6},
}

@article {Ram6,
    AUTHOR = {Choudhary, Ram Karan and Prajapati, Sunil Kumar},
     TITLE = {On matrix representations of groups of order {$p^5$} over {$\mathbb{Q}$}},
   JOURNAL = {J. Group Theory},
  FJOURNAL = {Joural of Group Theory},
      YEAR = {2026},
      PAGES = {(to appear)},
       
}

@article {Ram3,
    AUTHOR = {Choudhary, Ram Karan and Prajapati, Sunil Kumar},
     TITLE = {A combinatorial formula for the Wedderburn decomposition of rational group algebras of split metacyclic {$p$}-groups},
   JOURNAL = {J. Algebra Appl.},
  FJOURNAL = {Joural of Algebra and its Applications},
      YEAR = {2026},
      PAGES = {(to appear)},
       URL = {https://doi.org/10.1142/S0219498826500684},
       DOI = {doi.org/10.1142/S0219498826500684},
}

@manual{Gap,
    key          = "GAP",
    organization = "\href{https://www.gap-system.org}{https://www.gap-system.org}",
    title        = "{The GAP~Group, GAP -- Groups, Algorithms, and Programming,
                    Version 4.13.0}",
    year         = 2024,
    url          = "\href{https://www.gap-system.org}{https://www.gap-system.org}"
    }

@book {I,
    AUTHOR = {Isaacs, I. Martin},
     TITLE = {Character theory of finite groups},
      NOTE = {Corrected reprint of the 1976 original [Academic Press, New
              York; MR0460423 (57 \#417)]},
 PUBLISHER = {Dover Publications, Inc., New York},
      YEAR = {1994},
     PAGES = {xii+303},
      ISBN = {0-486-68014-2},
   MRCLASS = {20C15},
  MRNUMBER = {1280461},
}

@book {Serre1973,
    AUTHOR = {Serre, J.-P.},
     TITLE = {A course in arithmetic},
    SERIES = {Graduate Texts in Mathematics},
    VOLUME = {No. 7},
      NOTE = {Translated from the French},
 PUBLISHER = {Springer-Verlag, New York-Heidelberg},
      YEAR = {1973},
     PAGES = {viii+115},
   MRCLASS = {12-02 (10CXX 10DXX)},
  MRNUMBER = {344216},
}

@article {Shoda1,
    AUTHOR = {Olteanu, Gabriela},
     TITLE = {Computing the {W}edderburn decomposition of group algebras by
              the {B}rauer-{W}itt theorem},
   JOURNAL = {Math. Comp.},
  FJOURNAL = {Mathematics of Computation},
    VOLUME = {76},
      YEAR = {2007},
    NUMBER = {258},
     PAGES = {1073--1087},
      ISSN = {0025-5718,1088-6842},
   MRCLASS = {16S34 (20C15)},
  MRNUMBER = {2291851},
MRREVIEWER = {E.\ Jespers},
       DOI = {10.1090/S0025-5718-07-01957-6},
       URL = {https://doi.org/10.1090/S0025-5718-07-01957-6},
}

@article {Shoda2,
    AUTHOR = {Bakshi, Gurmeet K. and Maheshwary, Sugandha},
     TITLE = {The rational group algebra of a normally monomial group},
   JOURNAL = {J. Pure Appl. Algebra},
  FJOURNAL = {Journal of Pure and Applied Algebra},
    VOLUME = {218},
      YEAR = {2014},
    NUMBER = {9},
     PAGES = {1583--1593},
      ISSN = {0022-4049,1873-1376},
   MRCLASS = {16S34 (20C05)},
  MRNUMBER = {3188857},
MRREVIEWER = {Adalbert\ Bovdi},
       DOI = {10.1016/j.jpaa.2013.12.010},
       URL = {https://doi.org/10.1016/j.jpaa.2013.12.010},
}

@article {Shoda3,
    AUTHOR = {Bakshi, Gurmeet K. and Garg, Jyoti and Olteanu, Gabriela},
     TITLE = {Rational group algebras of generalized strongly monomial
              groups: primitive idempotents and units},
   JOURNAL = {Math. Comp.},
  FJOURNAL = {Mathematics of Computation},
    VOLUME = {93},
      YEAR = {2024},
    NUMBER = {350},
     PAGES = {3027--3058},
      ISSN = {0025-5718,1088-6842},
   MRCLASS = {16K20 (16S35 16U60 20C05)},
  MRNUMBER = {4780354},
MRREVIEWER = {Gurleen\ Kaur},
       DOI = {10.1090/mcom/3937},
       URL = {https://doi.org/10.1090/mcom/3937},
}

@article {Shoda4,
    AUTHOR = {Jespers, Eric and Olteanu, Gabriela and del R\'io, \'Angel},
     TITLE = {Rational group algebras of finite groups: from idempotents to
              units of integral group rings},
   JOURNAL = {Algebr. Represent. Theory},
  FJOURNAL = {Algebras and Representation Theory},
    VOLUME = {15},
      YEAR = {2012},
    NUMBER = {2},
     PAGES = {359--377},
      ISSN = {1386-923X,1572-9079},
   MRCLASS = {16S34 (16U60 20C05)},
  MRNUMBER = {2892512},
       DOI = {10.1007/s10468-010-9244-4},
       URL = {https://doi.org/10.1007/s10468-010-9244-4},
}

@article {Shoda5,
    AUTHOR = {Garc\'ia-Bl\'azquez, \`Angel and del R\'io, \'Angel},
     TITLE = {The isomorphism problem for rational group algebras of finite
              metacyclic groups},
   JOURNAL = {J. Pure Appl. Algebra},
  FJOURNAL = {Journal of Pure and Applied Algebra},
    VOLUME = {229},
      YEAR = {2025},
    NUMBER = {6},
     PAGES = {Paper No. 107951, 36},
      ISSN = {0022-4049,1873-1376},
   MRCLASS = {16S34 (20C05)},
  MRNUMBER = {4881589},
       DOI = {10.1016/j.jpaa.2025.107951},
       URL = {https://doi.org/10.1016/j.jpaa.2025.107951},
}

@article {Shoda6,
    AUTHOR = {Nachev, Nako and Epitropov, Yordan},
     TITLE = {Isomorphism of group algebras of metacyclic groups over the
              field of rational numbers},
   JOURNAL = {J. Algebra},
  FJOURNAL = {Journal of Algebra},
    VOLUME = {680},
      YEAR = {2025},
     PAGES = {58--69},
      ISSN = {0021-8693,1090-266X},
   MRCLASS = {16S34 (16U40 20C05)},
  MRNUMBER = {4913648},
       DOI = {10.1016/j.jalgebra.2025.04.040},
       URL = {https://doi.org/10.1016/j.jalgebra.2025.04.040},
}
\vspace{2em}
\end{document}